\documentclass[12pt]{amsart}
\pagestyle{plain}
\textwidth=12.9cm 
\textheight=21cm
\usepackage{hyperref}

\usepackage{amsthm}
\usepackage{amssymb,mathrsfs,mathcomp,amsfonts,tabularx, hhline, textcomp}

\usepackage{upgreek}
\usepackage[matrix,arrow,curve]{xy}

\usepackage{graphicx}
\newcommand{\comp}{\mathbin{\scriptstyle{\circ}}}
\newcommand{\g}{{\mathrm{g}}}
\newcommand{\B}{{\mathbf B}}
\newcommand{\CC}{\mathbb{C}}

\newcommand{\QQ}{\mathbb{Q}}

\newcommand{\ZZ}{\mathbb{Z}}
\newcommand{\PP}{\mathbb{P}}

\newcommand{\MMM}{{\mathscr{M}}}

\newcommand{\DP}{\operatorname{DP}_{5}^{\mathrm A_4}}
\newcommand{\qq}{\mathbin{\sim_{\scriptscriptstyle{\QQ}}}}
\newcommand{\qW}{\operatorname{q}_{\mathrm{W}}}
\newcommand{\qQ}{\operatorname{q_{\QQ}}}

\newcommand{\lcm}{\operatorname{lcm}}
\newcommand{\Pic}{\operatorname{Pic}}

\newcommand{\Cl}{\operatorname{Cl}}

\newcommand{\ind}{\mathrm{{r}}}
\newcommand{\tors}{\mathrm{t}}
\newcommand{\mumu}{{\boldsymbol{\mu}}}
\makeatletter
\providecommand*{\approxident}{\mathrel{\mathpalette\@approxident\sim}} 
\newcommand*{\@approxident}[2]{\sbox0{$#1\vcenter{}$}\sbox2{$\m@th#1\equiv$}
\dimen2=\dimexpr\ht2 - \ht0\relax\sbox4{$\m@th#1\sim$}
\dimen4=\dimexpr\ht4 - \ht0\relax\dimen0=\dimexpr-\ht4 - \dp4 + \dimen2 
\relax\vcenter{\offinterlineskip\copy4 \kern\dimen0 \copy4 \kern\dimen0 \copy4 \ifdim\dp4=\z@\kern\dimexpr -\ht0 + \dimen4\relax\fi}} 
\makeatother

\newcommand{\xref}[1]{{\rm~\ref{#1}}}

\newcounter{NN}\numberwithin{NN}{section}
\renewcommand{\theNN}{\arabic{NN}${}^o$}
\def\nr{\refstepcounter{NN}{\theNN}}

\makeatletter
\@addtoreset{equation}{subsection}
\renewcommand{\thefigure}{\arabic{section}.\arabic{subsection}.\@arabic\c@figure}
\@addtoreset{figure}{subsection}
\makeatother

\swapnumbers
\theoremstyle{plain}
\newtheorem{theorem}[subsection]{Theorem}
\newtheorem{lemma}[subsection]{Lemma}

\newtheorem{proposition}[subsection]{Proposition}

\newtheorem{corollary}[subsection]{Corollary}
\newtheorem{scorollary}[equation]{Corollary}
\newtheorem*{claim*}{Claim}

\newtheorem{slemma}[equation]{Lemma}

\theoremstyle{definition}

\newtheorem*{definition*}{Definition}

\newtheorem{example-remark}[subsection]{Remark-Example}
\newtheorem{subexample-remark}[equation]{Remark-Example}

\newtheorem{scase}[equation]{}

\newtheorem*{notation*}{Notation}

\newtheorem{examples}[subsection]{Examples}

\newtheorem{sremark}[equation]{Remark}

\author{Yuri Prokhorov}

\title{Rationality of $\mathbb{Q}$-Fano threefolds of large Fano index}
\dedicatory{To Miles Reid on his 70th birthday}
\address{Yuri~Prokhorov:
Steklov Mathematical Institute of Russian Academy of Sciences, Moscow, Russia
\newline\indent
Department of Algebra, 
Moscow Lomonosov University, Russia 
\newline\indent
National Research University Higher School of Economics, Russia 
}
\email{prokhoro@mi-ras.ru}

\thanks{
The author was partially supported by the HSE University Basic Research Program, Russian Academic Excellence Project '5-100'.
}
\begin{document}
\begin{abstract}
We prove that $\QQ$-Fano threefolds of Fano index $\ge 8$ are rational. 
\end{abstract}
\maketitle
\section{Introduction}
Recall that a projective algebraic variety $X$
called \textit {$\QQ$-Fano} if it has only
terminal $\QQ$-factorial singularities,
$\Pic (X)\simeq\ZZ$,
and the anticanonical divisor $-K_X$ is ample. 
$\QQ$-Fano varieties plays a very important role in the higher dimensional geometry
since they appears naturally in the minimal model program as building blocks in so-called Mori fiber spaces. 
It is known that $\QQ$-Fano varieties of given dimension are bounded, i.e. they form an algebraic family \cite{Kawamata-1992bF}, \cite{Birkar2016}.
Moreover, the method of \cite{Kawamata-1992bF} allows to produce a finite but very huge list of numerical candidates (Hilbert series) of $\QQ$-Fanos
\cite{GRD}.
In dimension three there are a lot of classificational results of $\QQ$-Fanos of special types (see e.g. \cite{Sano-1996}, \cite{Suzuki-2004}, \cite{Takagi-2006}, \cite{Prokhorov2008a}, \cite{Brown-Kerber-Reid-codim4}, \cite{Prokhorov-Reid})
but the full classification is very far from being complete. 

An important invariant of a $\QQ$-Fano variety $X$ is its 
\emph{$\QQ$-Fano index} $\qQ(X)$ which is the maximal integer $q$ such that $-K_X\qq qA$ for some integral Weil divisor $A$, where $\qq$ defines the $\QQ$-linear equivalence. 
In this paper we prove the following.

\begin{theorem}\label{theorem:main}
Let $X$ be a $\QQ$-Fano threefold with $\qQ(X)\ge 8$. Then $X$ is rational. 
\end{theorem}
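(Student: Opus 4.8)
The plan is to combine the existing classification of $\QQ$-Fano threefolds of large index with explicit birational geometry, treating the problem family by family. Writing $-K_X\qq \qQ(X)\,A$ for a primitive Weil divisor $A$, the first step is to invoke the numerical classification underlying the cited works: orbifold Riemann--Roch (in Reid's form) together with the boundedness quoted in the introduction constrains the basket of cyclic quotient singularities, the degree $A^3$, and the plurigenera $h^0(X,mA)$ so severely that the hypothesis $\qQ(X)\ge 8$ leaves only a short explicit list of admissible Hilbert series. For each entry one has a concrete model of $X$, typically realized through the graded ring $\bigoplus_m H^0(X,mA)$ as a weighted hypersurface or weighted complete intersection in a weighted projective space. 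This reduces Theorem~\ref{theorem:main} to verifying rationality on each member of a finite list.

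For each case I would exhibit a rationality construction of one of three kinds. First, when $X$ is itself a weighted projective space, or becomes one after an obvious birational modification of its quotient singularities, rationality is immediate. Second, and most often, I would search inside $|mA|$ for small $m$ for a pencil or net whose generic member is a rational surface: the associated map $X\dashrightarrow\PP^1$ or $X\dashrightarrow\PP^2$ then presents $X$ as a del Pezzo fibration of degree $d\ge 5$, or as a conic bundle, over a rational base. Third, in the residual cases I would project $X$ from a singular point of high index; such a projection is a Sarkisov link that simplifies the basket, and iterating it should terminate on a manifestly rational model.

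The decisive difficulty in the fibration approaches is to upgrade rational connectedness to genuine rationality, and this is where I expect the main obstacle to lie. If the generic fiber is rational over the function field $K$ of the base, then $\CC(X)$ is purely transcendental and $X$ is rational; so everything hinges on producing a $K$-rational point. For a del Pezzo fibration of degree $5$ this is automatic, since a degree-$5$ del Pezzo surface over any field has a rational point and is rational; for degrees $6\le d\le 9$ one must locate a section, and for a conic bundle over a rational surface one must exhibit a genuine section or at least a multisection of odd degree. I would extract such a section from the explicit model in each case, for instance as a coordinate stratum of the ambient weighted projective space or as the exceptional locus produced by the projection in the third construction, and then check that the resulting birational map onto $\PP^3$ (or onto $\PP^1\times\PP^2$, or onto a rational conic bundle) is well defined. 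Collecting these verifications across the finite list of Hilbert series yields the theorem.
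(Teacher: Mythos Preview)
Your proposal contains a genuine gap at its first reduction step. You claim that once the admissible Hilbert series are listed, ``for each entry one has a concrete model of $X$, typically realized through the graded ring $\bigoplus_m H^0(X,mA)$ as a weighted hypersurface or weighted complete intersection,'' and that rationality can then be checked on this explicit model. This is not known. The orbifold Riemann--Roch computation pins down the basket $\B(X)$, the degree $A^3$, and all the plurigenera, but it does \emph{not} produce a projective embedding, nor does it guarantee that every $X$ with the given numerics arises as (say) a weighted hypersurface. The paper says this explicitly in several places: for $\qQ=13$ with $\B=(2,3,3,5,7)$ it is expected but unproved that $X\simeq X_{12}\subset\PP(3,4,5,6,7)$; for $\qQ=8$ with $\B=(7,13)$ or $(5,7)$ even the existence of such varieties is open; and for $\B=(3,5,11)$ the known hypersurface family $X_{12}\subset\PP(1,3,4,5,7)$ is rational, but it is not known whether it exhausts the numerical type. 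Your fibration constructions and your extraction of sections ``as a coordinate stratum of the ambient weighted projective space'' all presuppose an embedding that we do not have.

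The paper's method is designed precisely to avoid this difficulty. Instead of working on an explicit model, it runs a Sarkisov link~\eqref{eq:sl} starting from a Kawamata blowup of a high-index point, and controls the output $\hat X$ purely through the numerical relation $k\hat q = qs_k + (q\beta_k - k\alpha)e$ together with the bounds from Propositions~\ref{prop:rat}, \ref{proposition:torsions}, and~\ref{prop:discrepancies}. These constraints are strong enough to force either a contradiction (so no non-rational $X$ with those invariants exists) or a link to a $\hat X$ of smaller index already known to be rational. Your third construction (projection from a singular point) is the right idea, but you treat it as a residual trick rather than the core engine, and you do not supply the Diophantine analysis of the link invariants that actually drives the argument.
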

Note that in some sense our result is optimal: according to \cite{Okada2019} a very general 
weighted hypersurface $X_{14}\subset \PP(2, 3, 4, 5, 7)$ is a non-rational (and even non-stably rational) $\QQ$-Fano threefold with $\qQ(X)=7$. On the other hand, 
the result of Theorem~\ref{theorem:main} can be essentially improved.
We hope that non-rational $\QQ$-Fano threefolds of large indices admit a reasonable classification.

The structure of the paper is as follows. Section~\ref{sect:prel} is preliminary. In Section~\ref{sect:torsion} we list certain kinds of $\QQ$-Fano threefolds with torsions in the Weil divisor class group $\Cl(X)$. In Section~\ref{sect:constr} the main birational construction is introduced. The proof of the main theorem is given in Sections~\ref{sect:7}-\ref{sect:8} by 
case by case analysis.

\section{Preliminaries}
\label{sect:prel}
We work over the complex number field $\CC$ throughout.
\subsection{Notation}

\begin{itemize}
\item 
$\Cl(X)$ denotes the Weil divisors class group of a normal variety;
\item 
$\Cl(X)_\tors$ denotes the torsion part of $\Cl(X)$;
\item 
$\B(X)$ is the basket of a terminal threefold $X$ \cite{Reid-YPG1987}; 
\item 
$\ind(X,P)$ is the singularity index of a terminal point $P\in X$;
\item
$\g(X):=\dim |-K_X|-1$ is the genus of a $\QQ$-Fano threefold $X$.
\end{itemize}
For a $\QQ$-Fano threefold $X$ we define its \emph{Fano} and
\emph{$\QQ$-Fano index} by:
\begin{align*}
\qW(X)&:= \max \{ q\in\ZZ \mid \hbox{$-K_X\sim qA$ with $A$ a Weil divisor}\},
\\[3pt]
\qQ(X)&:= \max \{ q\in \ZZ \mid \hbox{$-K_X\qq qA$ with $A$ a Weil divisor}\},
\end{align*}
where $\sim$ (resp. $\qq$) is the linear (resp. $\QQ$-linear) equivalence.
Clearly, $\qW(X)$ divides $\qQ(X)$, and $\qW(X)=\qQ(X)$ unless
$K_X+qA\in\Cl X$ is a nontrivial torsion element. 
Throughout this paper, for a $\QQ$-Fano threefold $X$, by $A$ we denote a Weil divisor such that $-K_X\qq \qQ(X) A$. If $\qQ(X)=\qW(X)$ we take $A$ so that $-K_X\sim \qW(X) A$.

\begin{theorem}[{\cite{Suzuki-2004}}]
Let $X$ be a $\QQ$-Fano threefold. Then 
\begin{equation}
\label{equation-index}
\qQ(X)\in \{1,\dots, 11, 13,17,19\}
\end{equation}
and all the possibilities do occur.
\end{theorem}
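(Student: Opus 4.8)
The plan is to bound $\qQ(X)$ from above and then eliminate the exceptional values by a finite case analysis, the two main engines being Reid's orbifold Riemann--Roch formula on terminal threefolds and Kawamata's bound on the basket. Write $q:=\qQ(X)$ and fix a Weil divisor $A$ with $-K_X\qq qA$. Put $r_P:=\ind(X,P)$ for each (fictitious) point $P\in\B(X)$. Since $X$ is Fano we have $\chi(\OOO_X)=1$, and Reid's formula expresses this as a combination of $-K_X\cdot c_2(X)$ and the local terms $\tfrac1{24}\bigl(r_P-1/r_P\bigr)$; together with Miyaoka's inequality $-K_X\cdot c_2(X)>0$ this yields Kawamata's bound
\begin{equation*}
\sum_{P\in\B(X)}\Bigl(r_P-\frac{1}{r_P}\Bigr)<24 .
\end{equation*}
In particular the basket is severely restricted, and only finitely many multisets $\{r_P\}$ are possible.

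Next I would extract Diophantine constraints on $q$ from the vanishing of cohomology. Since $mA-K_X\qq(m+q)A$ is ample for $m>-q$, Kawamata--Viehweg vanishing gives $h^i(\OOO_X(mA))=0$ for all $i>0$ and $m>-q$, so that $\chi(\OOO_X(mA))=h^0(\OOO_X(mA))$ there; moreover this equals $0$ for $-q<m<0$, as a negative multiple of an ample class cannot be effective. Reid's Riemann--Roch writes each $\chi(\OOO_X(mA))$ explicitly in terms of the intersection numbers $A^3$ and $A\cdot c_2(X)$ together with fractional contributions attached to the basket points, the term at $P$ depending only on the residue of $m$ modulo $r_P$ and on the local type of $A$ at $P$. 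The $q-1$ vanishing equations for $-q<m<0$, together with the requirement that every $\chi(\OOO_X(mA))$ be a non-negative integer, force the basket denominators to cancel and couple $q$ tightly to the indices $r_P$; combined with Kawamata's bound this confines $q$ to a finite set and yields $q\le 19$.

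It then remains to decide which values in $\{1,\dots,19\}$ actually occur, the crux being to rule out $q\in\{12,14,15,16,18\}$. For each such candidate I would run through every basket permitted by Kawamata's bound, impose the integrality and vanishing conditions of the previous step on $\chi(\OOO_X(mA))$ for small $m$, and derive a contradiction---typically a forced negative value of some $h^0$, or a non-integral $A^3$. One subtlety that must be tracked throughout is the gap between $\qW(X)$ and $\qQ(X)$: the $\QQ$-index may strictly exceed the Weil index precisely when $K_X+qA$ is a nontrivial torsion class in $\Cl(X)$, so the local analysis at each $P$ has to control the image of $A$ in the cyclic local class group rather than a mere linear-equivalence class. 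I expect this combinatorial elimination, rather than the initial bound, to be the main obstacle, since it requires an exhaustive and careful enumeration of baskets.

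Finally, to see that the surviving values are realized, I would exhibit explicit examples, most conveniently terminal weighted projective spaces: for instance $\PP(3,4,5,7)$ has $-K\sim\OOO(19)$ and hence $\qQ=19$, while $\PP(2,3,5,7)$ and $\PP(1,3,4,5)$ give $\qQ=17$ and $\qQ=13$; the indices $1\le q\le 11$ are equally easy to produce. This completes the determination of the possible Fano indices.
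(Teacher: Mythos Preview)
The paper does not actually prove this theorem: it is quoted as a known result of Suzuki \cite{Suzuki-2004} and used as input throughout. So there is no ``paper's own proof'' to compare against in the strict sense.

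That said, your sketch is the right strategy and is essentially Suzuki's method. The paper even reproduces the same machinery for closely related computations (Propositions~\ref{proposition:qQqW} and~\ref{proposition:torsions}): enumerate baskets subject to Kawamata's bound $\sum_P (r_P-1/r_P)<24$, compute $A^3$ and $\chi(tA)$ via orbifold Riemann--Roch, impose integrality and the Kawamata--Viehweg vanishing $\chi(tA)=0$ for $-q<t<0$, and reject the candidates that fail. Your remark about tracking the local type of $A$ at each $P$ (the residue $l_P$ with $A\sim l_P K_X$ locally) is exactly the ``Step~4'' issue the paper handles, and your point about the $\qW$ versus $\qQ$ distinction is the content of Lemma~\ref{lemma:qQ=qW}. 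The Bogomolov--Miyaoka inequality $(4q^2-3q)A^3\le -4K_X\cdot c_2(X)$ (Step~3 in the paper's algorithm) is a useful sharpening you might add, as it prunes the search substantially. Your examples for $q=13,17,19$ are the standard ones; they are terminal and have torsion-free class group, so indeed realize those $\qQ$-indices.

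In short: nothing to correct, but be aware you are reproving a cited result rather than filling a gap in the present paper.
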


The following easy observation will be used freely.

\begin{lemma}[{\cite[Lemma~5.1]{Kawamata-1988-crep}}] 
\label{lemma:K-index}
Let $(X\ni P)$ be a threefold terminal singularity and let $\Cl^{\mathrm{sc}}(X,P)$ be the subgroup of the (analytic) Weil divisor class group consisting of .
Weil divisor classes which are $\QQ$-Cartier. Then the group $\Cl^{\mathrm{sc}}(X,P)$ is cyclic of order $\ind(X,P)$
and is generated by the canonical class $K_X$. 
\end{lemma}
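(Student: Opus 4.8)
The plan is to realize $(X\ni P)$ as a cyclic quotient of its index-one cover and to transport the computation of $\Cl^{\mathrm{sc}}$ to that cover, where the class group vanishes. First I would record the local reformulation of ``$\QQ$-Cartier''. Since $(X\ni P)$ is the germ of a normal variety at a point, its local Picard group is trivial (projective modules over a local ring are free), so a Weil divisor class is Cartier precisely when it is principal, i.e. trivial in $\Cl(X,P)$; consequently a class is $\QQ$-Cartier exactly when it is torsion, and thus $\Cl^{\mathrm{sc}}(X,P)=\Cl(X,P)_{\tors}$. Writing $r:=\ind(X,P)$ for the Gorenstein index, by definition $r$ is the least positive integer with $rK_X$ Cartier, hence $rK_X\sim 0$ and $K_X$ has order exactly $r$ in $\Cl(X,P)$. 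This already yields the inclusion $\langle K_X\rangle\cong\ZZ/r\subseteq\Cl^{\mathrm{sc}}(X,P)$, so it remains to prove the reverse inclusion and that no further torsion occurs.

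For the reverse inclusion I would use the canonical (index-one) cover. The torsion class $K_X$ of order $r$ determines a cyclic $\mumu_r$-cover $\pi\colon(X^\sharp\ni P^\sharp)\to(X\ni P)$, étale in codimension one, with $(X^\sharp\ni P^\sharp)$ a Gorenstein terminal threefold singularity, i.e. an isolated cDV point. The key external input is that such a singularity is factorial: an isolated hypersurface singularity of dimension $\ge 3$ has trivial local class group by Grothendieck's parafactoriality, so $\Cl(X^\sharp,P^\sharp)=0$. Now the standard descent exact sequence for a quotient by a finite group acting freely in codimension one (the higher unit-group cohomology drops out because $\mumu_r$ is finite, leaving only the character group) reads
\[
0\to\operatorname{Hom}(\mumu_r,\CC^{*})\to\Cl(X,P)\xrightarrow{\ \pi^{*}\ }\Cl(X^\sharp,P^\sharp),
\]
and since the right-hand term vanishes it gives $\Cl(X,P)\cong\operatorname{Hom}(\mumu_r,\CC^{*})\cong\ZZ/r$. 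In particular $\Cl(X,P)$ is itself finite cyclic of order $r$, whence $\Cl^{\mathrm{sc}}(X,P)=\Cl(X,P)_{\tors}=\Cl(X,P)\cong\ZZ/r$, matching the lower bound of the previous paragraph.

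Finally I would check that this cyclic group is generated by $K_X$ rather than by some unrelated class. This is forced by the two bounds together: $\langle K_X\rangle$ has order $r$ and sits inside a group of order $r$, so $\Cl^{\mathrm{sc}}(X,P)=\langle K_X\rangle$. Intrinsically, under the identification $\Cl(X,P)\cong\operatorname{Hom}(\mumu_r,\CC^{*})$ the class $K_X$ corresponds to the character by which $\mumu_r$ acts on the canonical eigensheaf of $\pi_{*}\OOO_{X^\sharp}$, which is a generator of the character group. The main obstacle is the factoriality of the index-one cover: everything hinges on $\Cl(X^\sharp,P^\sharp)=0$, which is exactly where the terminal (equivalently cDV) hypothesis enters, through the isolated hypersurface structure of Gorenstein terminal threefold points. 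A secondary point requiring care is the precise matching of the kernel generator of $\pi^{*}$ with the canonical class, so that the generator is correctly identified as $K_X$ and the order as $\ind(X,P)$.
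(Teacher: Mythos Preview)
The paper does not supply its own proof of this lemma; it is quoted verbatim from \cite[Lemma~5.1]{Kawamata-1988-crep}. So there is no argument in the paper to compare against, and I can only assess your proposal on its own merits.

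Your overall strategy (pass to the index-one cover and use a descent exact sequence) is the right one, but the step you yourself flag as ``the main obstacle'' is stated incorrectly. You assert that an isolated hypersurface singularity of dimension $\ge 3$ has trivial local class group by Grothendieck's parafactoriality. Grothendieck's theorem (SGA2, Exp.~XI) requires dimension $\ge 4$; in dimension~$3$ it fails. The threefold ordinary double point $\{xy=zw\}\subset\CC^4$ is an isolated cDV (hence Gorenstein terminal) singularity whose analytic local class group is $\ZZ$, not $0$. Since such points certainly occur as index-one covers, your conclusion $\Cl(X^\sharp,P^\sharp)=0$, and with it the claim that $\Cl(X,P)$ is itself cyclic of order $r$, is false in general.

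The argument is easily repaired, because you only need the torsion to vanish upstairs. For an isolated hypersurface singularity $(X^\sharp\ni P^\sharp)\subset(\CC^4,0)$, Milnor's theorem gives that the link is simply connected, hence $\pi_1(X^\sharp\setminus\{P^\sharp\})=1$. By purity of the branch locus any torsion class in $\Cl(X^\sharp,P^\sharp)$ would produce a nontrivial finite \'etale cover of the punctured germ, contradicting simple connectivity; thus $\Cl(X^\sharp,P^\sharp)_\tors=0$. Your exact sequence, restricted to torsion parts, then yields
\[
0\;\longrightarrow\;\operatorname{Hom}(\mumu_r,\CC^{*})\;\longrightarrow\;\Cl(X,P)_\tors\;\longrightarrow\;\Cl(X^\sharp,P^\sharp)_\tors=0,
\]
so $\Cl^{\mathrm{sc}}(X,P)=\Cl(X,P)_\tors\cong\ZZ/r$, and the order computation for $K_X$ identifies the generator. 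With this correction the proof goes through.
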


\begin{lemma}\label{lemma:qQ=qW}
Let $X$ be a $\QQ$-Fano threefold and let $\ind(X)$ be the global Gorenstein index of $X$. 
Then the equality $\qQ(X)=\qW(X)$ holds if and only if $\qQ(X)$ and $\ind(X)$ are coprime.
\end{lemma}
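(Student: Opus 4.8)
The plan is to translate the statement into a divisibility question in $\Cl(X)$ and then localize it at the terminal points by means of Lemma~\ref{lemma:K-index}. Write $q:=\qQ(X)$ and fix a Weil divisor $A$ with $-K_X\qq qA$, so that $\tau_0:=-K_X-qA\in\Cl(X)_\tors$. I would first record the elementary reduction: since $\qW(X)$ divides $\qQ(X)$, one has $\qQ(X)=\qW(X)$ \emph{if and only if} $-K_X$ is divisible by $q$ in $\Cl(X)$, i.e.\ $-K_X\sim qA'$ for some Weil divisor $A'$. (If such $A'$ exists then $\qW(X)\ge q=\qQ(X)$, and the reverse inequality is automatic; conversely, by definition of $\qW$ a representative realizing $\qW(X)=q$ is exactly such an $A'$.) Because $A$ and $A'$ must have the same free part in $\Cl(X)$, this divisibility is in turn equivalent to $\tau_0\in q\,\Cl(X)_\tors$, so everything reduces to a statement about the torsion element $\tau_0$.

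The main tool is the restriction homomorphism
\[
\rho\colon \Cl(X)\longrightarrow \bigoplus_{P\in\Sing(X)} \Cl^{\mathrm{sc}}(X,P),
\]
sending a Weil divisor class (every such class is $\QQ$-Cartier since $X$ is $\QQ$-factorial) to its germs at the singular points. Away from $\Sing(X)$ the variety is smooth, hence locally factorial, so $\ker\rho$ consists precisely of the classes that are Cartier everywhere; that is, $\ker\rho=\Pic(X)\simeq\ZZ$. By Lemma~\ref{lemma:K-index} each summand $\Cl^{\mathrm{sc}}(X,P)$ is cyclic of order $\ind(X,P)$, generated by the class of $K_X$, and $\ind(X)=\lcm_P\ind(X,P)$. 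I would draw two consequences: first, $\rho(K_X)$ is the tuple of generators, so it has order $\ind(X)$; second, since $\ker\rho\simeq\ZZ$ is torsion free, $\rho$ is injective on $\Cl(X)_\tors$, whence the order of every torsion class divides $\lcm_P\ind(X,P)=\ind(X)$.

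For the forward implication I would apply $\rho$ to a hypothetical relation $-K_X\sim qA'$. At each $P$, writing the germ of $A'$ as $c_P$ times the generator $K_X$, this reads $q c_P\equiv -1 \pmod{\ind(X,P)}$, a congruence solvable only when $\gcd(q,\ind(X,P))=1$; letting $P$ vary gives $\gcd\bigl(q,\ind(X)\bigr)=1$. For the converse, assume $\gcd(q,\ind(X))=1$. Since $\operatorname{ord}(\tau_0)$ divides $\ind(X)$, we get $\gcd(q,\operatorname{ord}(\tau_0))=1$, so $q$ is invertible modulo $\operatorname{ord}(\tau_0)$; picking $w$ with $qw\equiv 1 \pmod{\operatorname{ord}(\tau_0)}$ yields $\tau_0=q(w\tau_0)$ and therefore $-K_X\sim q(A+w\tau_0)$, which establishes the required divisibility and hence $\qQ(X)=\qW(X)$.

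The step I expect to be the main obstacle is the clean identification $\ker\rho=\Pic(X)$ together with the injectivity of $\rho$ on $\Cl(X)_\tors$: this is where the hypotheses $\Pic(X)\simeq\ZZ$ and the $\QQ$-factoriality of $X$ genuinely enter, guaranteeing that the only torsion in $\Cl(X)$ is the one already visible in the local class groups of the terminal points and that its exponent is controlled by $\ind(X)$. Once this is in place, both implications are routine congruence computations.
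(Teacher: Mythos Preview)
Your proof is correct and follows essentially the same route as the paper. Both directions hinge on the same two facts: local solvability of $qc_P\equiv -1\pmod{\ind(X,P)}$ for the forward implication, and the observation that any torsion class in $\Cl(X)$ has order dividing $\ind(X)$ for the converse, after which one simply adjusts $A$ by a suitable multiple of the torsion element $\tau_0$ (the paper writes this as choosing $t$ with $(1+qt)\Xi\sim 0$, which is your $qw\equiv 1$ in disguise). The only genuine difference is that you spell out \emph{why} the torsion order divides $\ind(X)$, via the restriction map $\rho$ and the identification $\ker\rho=\Pic(X)\simeq\ZZ$; the paper simply asserts this fact. Your added justification is correct and makes the argument self-contained, but it does not constitute a different method.
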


\begin{proof}
The ``only if'' part of the statement immediately follows from Lemma~\ref{lemma:K-index} (see \cite[Lemma 1.2(3)]{Suzuki-2004}).
Let us prove the ``if'' part. So, we assume that $\gcd(\qQ(X), \ind(X))=1$.
Put $q:=\qQ(X)$ and write $-K_X\qq q A'$, where $A'$ is a Weil divisor. Then $\Xi:=K_X+qA'$ is a torsion element in $\Cl(X)$. 
Take $A=A' +t\Xi$, $t\in \ZZ$. Then 
\begin{equation*}
K_X+qA\sim (1+qt)\Xi. 
\end{equation*}
Since the order of $\Xi$ in $\Cl(X)$ divides $\ind(X)$, there exists $t\in \ZZ$ such that $(1+qt)\Xi\sim 0$. 
\end{proof}

The following proposition a consequence of the classification of $\QQ$-Fano threefolds of large degree (see \cite{Prokhorov-2007-Qe}, \cite{Prokhorov2008a}, \cite{Prokhorov-2013-fano}).

\begin{proposition}
\label{prop:rat}
Let $X$ be a $\QQ$-Fano threefold with $\qQ(X)=\qW(X)\ge 3$. 
Assume that $X$ is not rational. Then $X$ belongs to one of the following classes below.
\par\smallskip\noindent
\begin{center}
{\rm 
\renewcommand{\tabcolsep}{11pt} 
\begin{tabularx}{\textwidth}{|X|X|r|r|r|r|r|}
\hline
&&\multicolumn{5}{|c|}{$\dim |kA|$}
\\\hhline{|~|~|-----}
$\qQ$&$\g(X)$& $|A|$ & $|2A|$ & $|3A|$ & $|4A|$ & $|5A|$ 
\\\hline
$13$ & $4$ & $-1$ & $-1$ & $0$& $0$& $0$ 
\\
$11$ & $\le 9$ & $\le 0$ & $0$ & $0$ & $1$ & $\le 2$
\\ 
$9$ & $4$ & $-1$& 0& 0& 1& 1
\\
$8$ & $\le 10$ & $\le 0$& 0& $\le 1$& $\le 2$& $\le 3$ 
\\
$7$ & $\le 14$ & $\le 0$ & $\le 1$ & $\le 2$ & $\le 4$ & $\le 6$
\\
$6$ & $\le 15$ & $\le 0$ & $\le 1$ & $\le 3$ & $\le 6$ & $\le 11$ 
\\
$5$ & $\le 18$ & $\le 1$ & $\le 3$ & $\le 7$ & $\le 12$ &
\\
$4$ & $\le 21$ & $\le 1$ & $\le 5 $ & $\le12$ &&
\\
$3$ & $\le 20$ & $\le 2$ & $\le 9$ &&& \\\hline
\end{tabularx}}
\end{center}
\end{proposition}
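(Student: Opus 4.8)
The plan is to proceed by a case-by-case analysis according to the value of $q:=\qQ(X)=\qW(X)$. By Suzuki's theorem \eqref{equation-index} together with the hypothesis $q\ge 3$ we have $q\in\{3,4,\dots,11,13,17,19\}$, and since $\qQ(X)=\qW(X)$ we may fix an integral Weil divisor $A$ with $-K_X\sim qA$. For each admissible $q$, the classification of $\QQ$-Fano threefolds of large Fano index carried out in \cite{Prokhorov-2007-Qe}, \cite{Prokhorov2008a}, \cite{Prokhorov-2013-fano} yields a finite list of the possible baskets $\B(X)$ together with the degree $A^3$; these data are exactly what is needed to control the invariants appearing in the table.

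First I would recall that the quantities $\g(X)=\dim|-K_X|-1$ and $\dim|kA|$ are of a purely numerical nature. Since $X$ has terminal, hence rational, singularities and $-K_X$ is ample, Kawamata--Viehweg vanishing gives $H^i(X,\OOO_X(kA))=0$ for $i>0$ and $k\ge 0$, so that $\dim|kA|+1=\chi(\OOO_X(kA))$. The orbifold Riemann--Roch formula of \cite{Reid-YPG1987} expresses $\chi(\OOO_X(kA))$ through $A^3$, the intersection number $A\cdot c_2(X)$ and the basket $\B(X)$; moreover $A\cdot c_2(X)$ is itself pinned down by the basket via the relation $\chi(\OOO_X)=1$. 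Hence, once the classification fixes the pair $(\B(X),A^3)$ for a given $q$, the entire initial segment of the Hilbert series $\sum_{k\ge 0}\dim|kA|\,t^k$ is determined, and in particular the columns $|A|,\dots,|5A|$ and the value $\g(X)$ are read off directly.

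It then remains to decide rationality for each numerical type produced above and to retain only the non-rational ones. For the values $q\in\{10,17,19\}$, which do not occur in the table, every member of the corresponding family is rational: the cited classification exhibits each such $X$ as birational to an explicit rational model. The same explicit birational analysis disposes of all the remaining numerical types whose invariants exceed those recorded in the table, so that a non-rational $X$ is forced to satisfy the stated bounds on $\g(X)$ and on $\dim|kA|$; collecting the surviving possibilities yields precisely the nine rows displayed.

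The main obstacle is not the Riemann--Roch bookkeeping, which is mechanical once the classification is in hand, but the rationality input and its exhaustiveness. For each numerical type one must locate, within \cite{Prokhorov-2007-Qe}, \cite{Prokhorov2008a}, \cite{Prokhorov-2013-fano}, an explicit rational birational model — typically produced by a projection from a suitable singular point or by a Sarkisov link governed by the linear systems $|A|$ and $|2A|$ — and match the geometric description given there to the numerical invariants computed here. The delicate point is to ensure that this correspondence is complete across all values $q\ge 3$ simultaneously, so that no non-rational type with invariants smaller than those tabulated is overlooked.
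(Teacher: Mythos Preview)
Your proposal is correct and follows essentially the same route as the paper: invoke the classification results of \cite{Prokhorov2008a}, \cite{Prokhorov-2013-fano}, \cite{Prokhorov-e-QFano7} to see that for each $q$ every $\QQ$-Fano threefold with genus above the tabulated threshold is explicitly described and rational, and then use the standard computer enumeration of numerical candidates (orbifold Riemann--Roch plus Kawamata--Viehweg vanishing, as in \cite{Suzuki-2004}, \cite{Prokhorov2008a}, \cite{Prokhorov-Reid}, \cite{GRD}) to read off the bounds on $\dim|kA|$ for the remaining small-genus types. The only organizational difference is that the paper cleanly separates these two steps---the geometric classification handles exactly the large-genus cases, while the computer search handles the rest---so your worry in the final paragraph about overlooking a non-rational type with small invariants is moot: the table simply records \emph{all} numerical types below the genus threshold, rational or not.
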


\begin{proof}
Given $q$, $\QQ$-Fano threefolds $X$ with $\qQ(X)=q$ and genus $\g(X)\ge \g_q$ are completely described in \cite{Prokhorov2008a}, \cite{Prokhorov-2013-fano}, \cite{Prokhorov-e-QFano7}, where the number $\g_q$ is given by the third column in the table. It is easy to see that all these varieties are rational. The rest can be checked by a computer search 
as explained in \cite{Suzuki-2004}, \cite[Lemma~3.5]{Prokhorov2008a} or \cite[2.4]{Prokhorov-Reid} (see also \cite{GRD}).
\end{proof}

\begin{proposition}[{\cite{Kawamata-1996}}, {\cite{Kawakita2005}}]
\label{prop:discrepancies}
Let $Y\ni P$ be a threefold terminal point of index $r>1$ and 
let 
\begin{equation*}
f: (\tilde Y\supset E)\to (Y\ni P)
\end{equation*}
be a divisorial Mori extraction,
where $E$ is the exceptional divisor and $f(E)=P$. 
Write 
\begin{equation*}
K_{\tilde Y}=f^*K_Y+\alpha E.
\end{equation*}
Then the following assertions hold.
\begin{enumerate}
\item \label{theorem-Kawamata-blowup}
If $Y\ni P$ is cyclic quotient singularity of type
$\frac1r (1, a, r-a)$, then $\alpha=1/r$ and $f$ is a weighted blowup with weights $(1, a, r-a)$.
\item 
If $Y\ni P$ is a point of type other than $\mathrm{cA}/r$ and $r>2$, then 
$\alpha=1/r$.
\item 
If $Y\ni P$ is of type $\mathrm{cA}/r$
and its basket $\B(Y,P)$ consists of $m$ points of index $r$, then $\alpha=a/r$, where $m\equiv 0\mod a$.
\end{enumerate}
\end{proposition}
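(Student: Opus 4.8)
The plan is to combine one elementary reduction with the analytic classification of divisorial contractions to three-dimensional terminal points. Since $Y\ni P$ has index $r$, the divisor $rK_Y$ is Cartier, and $\tilde Y$ is smooth (hence Gorenstein) at the generic point of $E$; thus $r\alpha E=rK_{\tilde Y}-f^*(rK_Y)$ is Cartier there, so $r\alpha\in\ZZ$, while terminality of $\tilde Y$ forces $\alpha>0$. Hence $\alpha=a/r$ for some positive integer $a$, and the task in each part is to pin down the admissible values of $a$ from the analytic type of $(Y\ni P)$.

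For part (i) I would use the toric model of the quotient singularity. Realizing $\frac1r(1,a,r-a)$ as the affine toric variety of the positive octant $\sigma$ in the overlattice $N=\ZZ^3+\ZZ\cdot\frac1r(1,a,r-a)$, the weighted blowup with weights $(1,a,r-a)$ is the star subdivision of $\sigma$ along the primitive lattice vector $v=\frac1r(1,a,r-a)$. With $m_\sigma=(1,1,1)$ the linear form taking the value $1$ on the generators of $\sigma$, the discrepancy of this valuation is
\begin{equation*}
\alpha=\langle m_\sigma,v\rangle-1=\frac{1+a+(r-a)}{r}-1=\frac1r,
\end{equation*}
so $a=1$ here. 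The substantive point — that \emph{every} divisorial Mori extraction with $f(E)=P$ is exactly this weighted blowup — I would derive from Kawamata's rigidity theorem \cite{Kawamata-1996}: passing to the index-one cover $(\CC^3\ni 0)$, on which $\mumu_r$ acts with weights $(1,a,r-a)$, the extraction lifts to a $\mumu_r$-equivariant divisorial contraction of $\CC^3$, and the conditions that $\tilde Y$ be terminal with $-K_{\tilde Y}$ being $f$-ample leave only the weighted blowup at the origin with the prescribed weights.

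For parts (ii) and (iii) I would again pass to the index-one cover $\pi\colon(Y^\sharp\ni P^\sharp)\to(Y\ni P)$, now a $\mumu_r$-equivariant compound Du Val point, and control $f$ through a general member $D\in|-K_Y|$, which by the general elephant theorem has Du Val singularities. Comparing $\alpha$ with the discrepancy induced on the strict transform of $D$ shows that $a$ is constrained by the monomials in the defining equation of the singularity. For the types of index $r>2$ other than $\mathrm{cA}/r$ (the cyclic case being already settled in (i)) — namely $\mathrm{cAx}/4$ and $\mathrm{cD}/3$ — Kawakita's classification \cite{Kawakita2005} shows that the only value realized by a Mori extraction is $a=1$, giving $\alpha=1/r$ and hence (ii). For a $\mathrm{cA}/r$ point, written in its normal form $xy+g(z^r,u)=0$ modulo $\frac1r(1,-1,b,0)$ with $\gcd(b,r)=1$, the whole basket consists of $m$ points of index $r$, and the admissible weighted blowups are indexed by $a$; the requirement that the blowup weights be compatible with the monomials of $g$ forces $m\equiv 0\pmod a$, which is precisely the constraint in (iii).

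The main obstacle is the same in spirit in both halves and is the genuinely hard input: it is not the discrepancy computation, which is a short toric or weight count, but the \emph{classification} of all divisorial Mori extractions over a fixed terminal point — Kawamata's rigidity in the quotient case and Kawakita's exhaustive analysis of the admissible weighted blowups over $\mathrm{cA}/r$, $\mathrm{cAx}/4$ and $\mathrm{cD}/3$ points. I would therefore treat these two classification theorems as established, organizing the proof as the case division above and citing \cite{Kawamata-1996} and \cite{Kawakita2005} rather than reproving them.
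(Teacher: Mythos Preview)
The paper gives no proof of this proposition: it is stated with the attributions \cite{Kawamata-1996}, \cite{Kawakita2005} and used as a black box. Your proposal is therefore already doing more than the paper does, namely sketching how the three assertions are extracted from those two classification theorems.

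Your sketch is sound and correctly organized. The integrality observation $\alpha=a/r$ with $a\in\ZZ_{>0}$ is the right opening, the toric discrepancy computation for the weighted blowup in (i) is correct, and you rightly identify that the nontrivial content in each part is the \emph{uniqueness/classification} of the Mori extraction rather than the discrepancy count. Your enumeration of the non-$\mathrm{cA}/r$ types with $r>2$ (namely $\mathrm{cAx}/4$ and $\mathrm{cD}/3$, the cyclic quotient being handled in (i)) matches Mori's list, and invoking Kawakita's tables for those and for $\mathrm{cA}/r$ is exactly how one reads off (ii) and (iii). One minor remark: in (iii) you describe the constraint as coming from compatibility of the blowup weights with the monomials of $g$; more precisely, in Kawakita's normal form the axial multiplicity equals $m$, and the weight on $u$ in the admissible blowups must divide it, which is where $a\mid m$ comes from. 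But this is a matter of phrasing, not a gap.
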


\section{$\QQ$-Fano threefolds with torsion in the divisor class group}
\label{sect:torsion}
\subsection{}\label{notation:torsion}
Let $X$ be a $\QQ$-Fano threefold and let $\Xi\in \Cl(X)_\tors$ be a non-trivial torsion element of order $n$.
Then $\Xi$ defines a finite \'etale in codimension two cover 
$\pi: X'\to X$ such that $X'$ has only terminal singularities, $K_{X'}=\pi^* K_X$ and $\pi^*\Xi=0$
(see \cite[3.6]{Reid-YPG1987}). Clearly, $X'$ is a Fano variety. However, in general, we cannot say that $X'$ is $\QQ$-factorial neither $\Pic(X')\simeq \ZZ$. Let $q:=\qQ(X)$. Take $A$ so that $-K_X\qq qA$ and let $A':=\pi^*A$.
Then $-K_{X'}\qq q A'$. Hence, $\qQ(X')$ is divisible by $q$.

\begin{sremark}\label{remark:torsion}
In the above notation, assume that $q\ge 5$. Run the MMP on $X'$. On each step the relation $-K_{X'}\qq q A'$ is preserved. Therefore, at the end we obtain a $\QQ$-Fano threefold $X''$ such that $-K_{X''}\qq q A''$, where $q\ge 5$. Then by~\eqref {equation-index} we have $\qQ(X'')=q$ and so $\qQ(X')=q$. Moreover, 
\[
\g(X'')\ge \g(X').
\]
\end{sremark} 

\begin{proposition}\label{proposition:qQqW}
Notation as in~\xref{notation:torsion}. 
Assume that $q\ge 3$ and $q\neq \qW(X)$. Take $\Xi:=K_X+qA$.
Then 
\begin{equation}\label{eq:qQqW}
(q,n)=(3,3)\quad \text{or}\quad (4,2).
\end{equation}
\end{proposition}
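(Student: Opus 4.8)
The plan is to pass to the cyclic cover attached to $\Xi$ and combine it with the local description of class groups at terminal points. Write $q_{\mathrm W}:=\qW(X)$; since $q_{\mathrm W}\mid q$ and $q\neq q_{\mathrm W}$, we have $d:=q/q_{\mathrm W}\ge 2$, and Lemma~\ref{lemma:qQ=qW} gives $\gcd(q_{\mathrm W},\ind(X))=1$ while $\gcd(q,\ind(X))>1$. Writing $-K_X\sim q_{\mathrm W}B$ for a Weil divisor $B$, one gets $\Xi=K_X+qA=-q_{\mathrm W}(B-dA)=-q_{\mathrm W}\Theta$ with $\Theta:=B-dA\in\Cl(X)_\tors$. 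Since the order of any torsion class divides $\ind(X)$ and $\gcd(q_{\mathrm W},\ind(X))=1$, multiplication by $q_{\mathrm W}$ is invertible on $\Cl(X)_\tors$, so $n=\operatorname{ord}(\Xi)$ is coprime to $q_{\mathrm W}$ and divides $\ind(X)$; in particular the ``defect'' $d$ must share a prime with $\ind(X)$.

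The next step records what the cover buys us. For $\pi\colon X'\to X$ as in~\xref{notation:torsion} we have $\pi^*\Xi=0$, and pulling back $\Xi=K_X+qA$ yields $K_{X'}+qA'\sim 0$, i.e. $-K_{X'}\sim qA'$ as a \emph{genuine} linear equivalence, so $\qW(X')\ge q$. When $q\ge 5$ I would then invoke Remark~\ref{remark:torsion}: running the MMP produces a $\QQ$-Fano threefold $X''$ with $\qQ(X'')=q$ and $\g(X'')\ge\g(X')$, and pushing forward $-K_{X'}\sim qA'$ (the program is an isomorphism in codimension one followed by divisorial contractions, under which linear equivalence of Weil divisors descends) gives $-K_{X''}\sim qA''$, whence $\qW(X'')=\qQ(X'')=q$. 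Thus $X''$ is a $\QQ$-Fano with coinciding indices, to which the genus bounds of Proposition~\ref{prop:rat} and the underlying classification apply.

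The arithmetic heart is a Riemann--Roch estimate comparing the two genera. Using $\pi_*\OOO_{X'}=\bigoplus_{i=0}^{n-1}\OOO_X(i\Xi)$ and Kawamata--Viehweg vanishing (each $-K_X+i\Xi$ is numerically equivalent to the ample $-K_X$, and likewise on $X'$), one obtains $h^0(X',-K_{X'})=\sum_{i=0}^{n-1}\chi(X,-K_X+i\Xi)$, so that $\g(X')+2=\sum_{i=0}^{n-1}\chi(X,-K_X+i\Xi)$. Bounding the twisted Euler characteristics below through orbifold Riemann--Roch makes $\g(X')$ grow with $n$, while $\g(X'')$ is bounded above by the table in Proposition~\ref{prop:rat}; the resulting squeeze rules out $q\ge 5$ outright and forces $n$ to be small when $q\in\{3,4\}$.

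Finally, for $q=3$ and $q=4$ I would determine $n$ exactly by the local analysis. Since $\Pic(X)\cong\ZZ$ is torsion-free and equals the kernel of $\Cl(X)\to\bigoplus_P\Cl(X,P)$, the torsion injects as $\Cl(X)_\tors\hookrightarrow\bigoplus_P\Cl(X,P)$, so $n=\operatorname{lcm}_P\operatorname{ord}(\Xi_P)$. At a point $P$ of index $r_P$, Lemma~\ref{lemma:K-index} gives $\Cl^{\mathrm{sc}}(X,P)=\ZZ/r_P$ generated by $K_X$; writing $A_P=a_PK_X$ we get $\Xi_P=(1+qa_P)K_X$, whose order divides $r_P$. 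Combining this with $\gcd(n,q_{\mathrm W})=1$, with $n\mid\ind(X)$, with the requirement that $d$ and $\ind(X)$ share a prime, and with the constraint from~\eqref{equation-index} that $q\in\{3,4\}$, pins the surviving cases down to $(q,n)=(3,3)$ and $(4,2)$. The main obstacle is precisely the Riemann--Roch estimate of the third paragraph: controlling the twisted contributions $\chi(X,-K_X+i\Xi)$ uniformly enough to make the genus comparison eliminate $q\ge 5$, after which the local computation is routine bookkeeping.
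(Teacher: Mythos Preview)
Your approach is genuinely different from the paper's: the paper proves this proposition by a direct computer enumeration. It lists all admissible baskets (Step~1), computes $A^3$ and checks the Bogomolov--Miyaoka bound (Steps~2--3), runs over the finitely many local types $l_P$ for $A$ (Step~4), reads off $n$ as the least positive integer with $\chi(n\Xi)=1$ (Step~5), and discards candidates that violate the vanishing $\chi(tA+s\Xi)=0$ for $-q<t<0$ (Step~6). The output of this search is the short list~\eqref{eq:qQqW}. No cover, no genus comparison.

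Your proposal has two genuine gaps. First, the Riemann--Roch squeeze for $q\ge 5$ is not carried out, and you flag this yourself. The difficulty is real: the identity $\g(X')+2=\sum_{i}\chi(-K_X+i\Xi)$ holds, but the individual terms differ from $\chi(-K_X)$ only through the basket corrections $c_P$, which can have either sign, so there is no obvious monotone lower bound in $n$ without controlling the basket --- and controlling the basket is exactly what the paper's enumeration does. In the paper's own treatment of the analogous Proposition~\ref{proposition:torsions}, the genus of the cover is used only as a final filter on an already-enumerated list, not as a standalone bound.

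Second, and more concretely, your ``local analysis'' for $q\in\{3,4\}$ does not determine $n$. With $q=3$ you get $q_{\mathrm W}=1$, so $\gcd(n,q_{\mathrm W})=1$ is vacuous; you know $3\mid\ind(X)$ and $n\mid\ind(X)$, but nothing you wrote excludes $n=2$, or $n=6$, or any other divisor of $\ind(X)$. Likewise for $q=4$ (where again $q_{\mathrm W}=1$): you obtain $2\mid\ind(X)$ and $n\mid\ind(X)$, but no mechanism forces $n=2$. The fact that only $(3,3)$ and $(4,2)$ survive is precisely what the orbifold Riemann--Roch enumeration establishes; it is not a consequence of the coarse divisibility constraints you list. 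As written, your argument is a plausible outline that still requires the same kind of computation the paper performs.
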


\begin{proof}
As in Proposition~\ref{prop:rat} we use a computer search.
But in this case the algorithm should be modified as follows (cf. \cite{Caravantes2008}).
For short, we denote $r_P:=\ind(X,P)$. 
Let $r:=\lcm(\{r_P\})$ be the global Gorenstein index of~$X$.

\subsection*{Step 1} 
By \cite{Kawamata-1992bF} we have the inequality
\begin{equation*}
0<-K_X\cdot c_2(X)= 24-\sum_{P\in \B} \frac{r_P-1}{r_P}.
\end{equation*}
This produces a
finite (but huge) number of possibilities for the basket $\B(X)$ and the number
$-K_X\cdot c_2(X)$. 

\subsection*{Step 2.}
\eqref{equation-index} implies that $q\in \{3,\dots,11, 13,17,19\}$.
In each case we compute $A^3$ by the formula
\begin{equation*}
A^3=\frac{12}{(q-1)(q-2)}\Bigl(
1-\frac{A\cdot c_2(X)}{12}+\sum_{P\in B} c_P(-A)
\Bigr)
\end{equation*}
(see \cite{Suzuki-2004}), where $c_P$ is the correction term in the
orbifold Riemann-Roch formula \cite{Reid-YPG1987}. The number $rA^3$
must be a positive integer \cite[Lemma~1.2]{Suzuki-2004}.

\subsection*{Step 3.}
Next, by \cite[Prop.\ 2.2]{Suzuki-2004} the Bogomolov--Miyaoka inequality (see \cite{Kawamata-1992bF})
implies that
\begin{equation*}
\left(4q^2 - 3q\right)A^3\le -4K_X\cdot c_2(X).
\end{equation*}

\subsection*{Step 4.} 
In a neighborhood of each point $P\in X$ we can write 
$A\sim l_PK_X$ by Lemma~\ref{lemma:K-index}, where $0\le l_P<r_P$.
There is a finite number of possibilities for the collection $\{(l_P)\}$.

\subsection*{Step 5.}
The number $n$ is determined as minimal positive such that $\chi(n\Xi)=1$ (by the Kawamata--Viehweg vanishing).
Hence, $n$ can be computed by using
orbifold Riemann-Roch.

\subsection*{Step 6.}
Finally, applying Kawamata--Viehweg vanishing we obtain
\begin{equation*}
\chi(tA+s\Xi)=h^0(tA+s\Xi)=0.
\end{equation*}
for $-q<t<0$ and $0\le s<n$. Again, we check this condition using
orbifold Riemann-Roch.

To run this algorithm the author used the computer algebra system PARI/GP \cite{PARI2}.
As the result, 
we get a short list from which one can see that \eqref{eq:qQqW} holds.
\end{proof}

\begin{proposition}\label{proposition:torsions}
Notation as in~\xref{notation:torsion}. 
Assume that $q\ge 5$ and $\Cl(X)_\tors$ contains an element $\Xi$ of order $n\ge 2$. 
Then $n\le 3$, $\qQ(X)=q$, and one of the following holds:
\par\smallskip\noindent
\begin{center}
{\rm 
\renewcommand{\tabcolsep}{4pt}
\begin{tabularx}{\textwidth}{|c|c|c|c|X|c|c|c|c|}
\hline
& $n$& $q$&$\g(X)$ &$\B(X)$& $A^3$& $\mathbf{k}$ & $\B(X')$&$\g(X')$
\\\hline
\nr\label{tor:1} & $2$& $5$& 2&$(2, 4, 14)$& $1/28$& $(1, 0, 7)$ &
$(4^2, 7)$ &4

\\
\nr\label{tor:2} & $3$& $5$& 3&$(2, 9, 9)$& $1/18$& $(0, 3, 6)$& 
$(2^3, 3^2)$ & 10

\\
\nr\label{tor:3} & $2$& $5$& 5&$(4, 4, 12)$& $1/12$& $(0, 2, 6)$ &
$(2, 4^2, 6)$ & 10

\\
\nr\label{tor:4} & $2$& $5$& 7&$(2, 2, 3, 14)$& $5/42$& $(0, 1, 0, 7)$ &
$(2^2, 3^2, 7)$ &14

\\
\nr\label{tor:6} & $2$& $5$& 10&$(2, 3, 4, 12)$& $1/6$& $(0, 0, 2, 6)$& 
$(2^3, 3^2, 6)$ & 20

\\
\nr\label{tor:5} & $2$& $5$& 8&$(2, 2, 4, 8)$& $1/8$& $(1, 1, 2, 4)$& 
$(2, 4)$ &16

\\
\nr\label{tor:7} & $2$& $5$& 11&$(2, 4, 4, 6)$& $1/6$& $(1, 2, 2, 3)$ &
$(2^2, 3)$ & 21

\\
\nr\label{tor:8} & $2$& $7$& 6&$(2, 6, 10)$& $1/30$& $(0, 3, 5)$ &
$(2^2, 3, 5)$ & 11

\\
\nr\label{tor:9} & $2$& $7$& 7&$(2, 2, 3, 4, 8)$& $1/24$& $(1, 1, 0, 2, 4)$ &
$(2, 3^2, 4)$ &14\\\hline
\end{tabularx}
}
\end{center}
\par\smallskip\noindent 
Moreover, the group $\Cl(X)_\tors$ is cyclic and generated by $\Xi$.
\end{proposition}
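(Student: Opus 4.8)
The plan is to reuse, with one modification, the orbifold Riemann--Roch computer search of Proposition~\ref{proposition:qQqW}, and then to read off the invariants of the cover $X'$ from those of $X$. First observe that since $q\ge 5$, Proposition~\ref{proposition:qQqW} excludes the pairs $(q,n)\in\{(3,3),(4,2)\}$, so necessarily $q=\qW(X)$; in particular $-K_X\sim qA$ and the local description $A\sim l_PK_X$ of Step~4 is available at every point $P$. I would then run Steps~1--6 verbatim, restricted to the values $q\in\{5,7,9,11,13,17,19\}$ permitted by~\eqref{equation-index}: enumerate the baskets $\B(X)$ from the Kawamata bound $0<-K_X\cdot c_2(X)=24-\sum_P(r_P-1)/r_P$, compute $A^3$ and keep only the cases with $rA^3\in\ZZ_{>0}$, impose Bogomolov--Miyaoka $(4q^2-3q)A^3\le -4K_X\cdot c_2(X)$, fix the local data $\{l_P\}$, determine $n$ as the least positive integer with $\chi(n\Xi)=1$, and enforce the vanishing $\chi(tA+s\Xi)=0$ for $-q<t<0$, $0\le s<n$. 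Reading $\g(X)=\chi(-K_X)-2$ off Riemann--Roch, the search returns exactly the nine lines of the $X$-columns, and in particular $n\le 3$.

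For the cover, I would use that $\pi\colon X'\to X$ is the cyclic $n$-cover attached to $\Xi$, so that $\pi_*\OOO_{X'}=\bigoplus_{i=0}^{n-1}\OOO_X(i\Xi)$ and $-K_{X'}=\pi^*(-K_X)$. The basket $\B(X')$ is then local over $X$: at $P\in\B(X)$ the class $\Xi|_{(X,P)}$ lies in the cyclic group $\Cl^{\mathrm{sc}}(X,P)\simeq\ZZ/r_P$ of Lemma~\ref{lemma:K-index} and equals $l_PK_X$; if it has order $d\mid n$ there are $n/d$ preimages, each a terminal point of the type induced on the corresponding quotient cover, and an untwisted point ($d=1$) simply splits into $n$ copies. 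Summing over $P$ reproduces the column $\B(X')$. Likewise $\g(X')=h^0(X',-K_{X'})-2=\sum_{i=0}^{n-1}h^0(X,-K_X+i\Xi)-2$, each summand computed again by orbifold Riemann--Roch; this reproduces $\g(X')$, while $(A')^3=nA^3$ and $\qQ(X')=q$ follow as in Remark~\ref{remark:torsion}.

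It remains to show $\Cl(X)_\tors=\langle\Xi\rangle\simeq\ZZ/n$. Since $\Pic(X)\simeq\ZZ$ is torsion-free and equals the kernel of the restriction map $\Cl(X)\to\bigoplus_P\Cl(X,P)$ to the analytic local class groups, the torsion subgroup injects; and as every torsion class is $\QQ$-Cartier its image lies in $\bigoplus_P\Cl^{\mathrm{sc}}(X,P)\simeq\bigoplus_P\ZZ/r_P$, which already bounds the exponent of $\Cl(X)_\tors$ by $\lcm_P(r_P)$ and its order by $\prod_P r_P$. To bound the order tightly I would invoke the volume growth of the maximal \'etale-in-codimension-two abelian cover $\hat X\to X$: its degree $N=|\Cl(X)_\tors|$ satisfies $\hat A^3=NA^3$, while running the MMP on $\hat X$ produces a $\QQ$-Fano of index divisible by $q\ge 5$ to which Bogomolov--Miyaoka applies, bounding $\hat A^3$ and hence $N$. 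Inspecting the nine baskets against these two constraints then forces $N=n$ and leaves no room for a non-cyclic subgroup. The main obstacle is exactly this last step: the search locates a single torsion class of order $n$, but ruling out further independent torsion --- establishing that the \emph{whole} group is cyclic of order precisely $n$ --- rests on combining the local embedding $\Cl(X)_\tors\hookrightarrow\bigoplus_P\ZZ/r_P$ with the Bogomolov--Miyaoka volume bound on the maximal cover, followed by a basket-by-basket verification.
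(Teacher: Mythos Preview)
Your plan has the right skeleton, but there are two genuine gaps relative to the paper's argument.

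First, you cannot run Steps~1--6 of Proposition~\ref{proposition:qQqW} \emph{verbatim}. There $\Xi=K_X+qA$, so the local class of $\Xi$ at $P$ is $(1+ql_P)K_X$, determined by the $l_P$'s you enumerate in Step~4. Here $\Xi$ is an \emph{arbitrary} torsion element, independent of $A$. Since $q=\qW(X)$ and $\gcd(q,r)=1$ by Lemma~\ref{lemma:qQ=qW}, the $l_P$ for $A$ are now uniquely fixed by $1+ql_P\equiv 0\pmod{r_P}$; what you must instead enumerate is a separate collection $\mathbf{k}=(k_P)$ with $\Xi\sim k_PK_X$ near $P$. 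Without this, your Step~5 (computing $\chi(n\Xi)$) and Step~6 (the vanishing $\chi(tA+s\Xi)=0$) are not well-posed: you have no local data for $\Xi$ to feed into orbifold Riemann--Roch. Your later remark that ``$\Xi|_{(X,P)}$ \ldots\ equals $l_PK_X$'' confirms this conflation of $l_P$ with $k_P$. The paper makes exactly this modification explicit (its Step~4${}'$), and the column $\mathbf{k}$ in the table is the output.

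Second, and more seriously, the raw search does \emph{not} return exactly the nine rows. The paper states that several additional candidates survive Steps~1--6 and must be eliminated by a further constraint you do not mention: for each candidate one computes $\B(X')$ and $\g(X')$ of the cyclic cover, runs the MMP on $X'$ as in Remark~\ref{remark:torsion} to reach a $\QQ$-Fano $X''$ with $\qQ(X'')=q$ and $\g(X'')\ge\g(X')$, and then invokes the classification bound $\g(X'')\le 32$ for $\qQ=5$ from \cite[Th.~1.2(v)]{Prokhorov-2013-fano}. The extra candidates violate this inequality. Your proposal computes $\g(X')$ only \emph{after} asserting the nine rows, so it never uses $\g(X')$ as a filter; this is a missing step, not a reordering.

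Your approach to the cyclicity statement (embedding $\Cl(X)_\tors\hookrightarrow\bigoplus_P\ZZ/r_P$ and bounding the degree of the maximal abelian cover via Bogomolov--Miyaoka) is a reasonable alternative to the paper's implicit argument, which instead relies on the exhaustiveness of the enumeration over $\mathbf{k}$: for each basket in the table only the listed $\mathbf{k}$ (and its inverse, when $n=3$) survives, leaving no room for a second independent torsion element. Your route is more conceptual but, as you acknowledge, needs the basket-by-basket check to close; the paper's route gets this for free once the $\mathbf{k}$-enumeration is actually carried out.
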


\begin{proof}
Similar to Proposition~\ref{proposition:qQqW}. But in this case, $\qQ(X)=\qW(X)$ and we have to modify one step:

\smallskip
\textbf{Step 4${}'$.} 
In this case $\gcd(q,r)=1$ by Lemma~\ref{lemma:qQ=qW}. Since $K_X+qA\sim 0$, the numbers $l_P$ are uniquely determined by $1+ql_P\equiv 0\mod r_P$.
But for $\Xi$ there are several choices. 
Again, near each point $P\in X$ we can write 
$\Xi\sim k_PK_X$ by Lemma~\ref{lemma:K-index}, where for the collection $\mathbf{k}=(\{k_P\})$ there are only a finite number of possibilities.

We obtain a list $\{(n, q, \B(X), \g(X), A^3, \mathbf{k})\}$. In each case we compute the basket $\B(X')$ of a (terminal) Fano threefold $X'$ with $A'^3=nA^3$. By Remark~\ref{remark:torsion} we have $\qQ(X')=q$. 
Then we can compute $\g(X')$ by orbifold Riemann-Roch.
At the end we get the list in the table and several extra possibilities which do not occur because 
$\g(X') \le 32$ in the case $\qQ(X')=5$ by \cite[Th. 1.2(v)]{Prokhorov-2013-fano} and Remark~\ref{remark:torsion}.
\end{proof}

We do not assert that all the possibilities in Proposition~\ref{proposition:torsions} occur.
We are able only to provide several examples for \ref{tor:2}, \ref{tor:5}-\ref{tor:9}. 

\begin{examples}
The following quotient of weighted hypersurfaces are $\QQ$-Fano threefolds as in \ref{tor:2}, \ref{tor:5}-\ref{tor:9}.

\begin{itemize}
\item[\ref{tor:2}]
$\{x_1^6+x_2^3+x_2'^3 +x_3x_3'=0\}\subset \PP(1, 2^2, 3^2)/\mumu_3 (0,1,-1,1,-1)$;
\item[\ref{tor:5}]
$\{x_1^6+x_1'^6+x_2x_4+x_3^2=0\}\subset \PP(1^2,2,3,4)/\mumu_2(0,1,1,1,1)$;
\item[\ref{tor:7}]
$\{x_1^4+x_1'^4+x_1x_3+x_2^2+x_2'^2=0\}\subset \PP(1^2,2^2,3)/\mumu_2(0,1,1,1,0)$;
\item[\ref{tor:8}] 
$\{x_1^8+x_2^4+x_3x_5+x_4^2=0\}\subset \PP(1,2,3,4,5)/\mumu_2(0,1,1,1,1)$;
\item[\ref{tor:9}]
$\{x_1^6+x_2x_4+x_3^2+x_3'^2=0\}\subset \PP(1,2,3^2,4)/\mumu_2(0,1,0,1,1)$. 
\end{itemize}
One can expect also that the variety \ref{tor:1} is a quotient of a codimension four $\QQ$-Fano (see \cite[No.~41418]{GRD} and \cite[\S~5.4]{Coughlan-Duca2018}).
\end{examples}

Using the orbifold Riemann-Roch one can compute dimensions of linear systems on $X$:

\begin{corollary}\label{cor:7tor}
In the cases \xref{tor:8} and \xref{tor:9} of Proposition~\xref{proposition:torsions} the dimension of the linear systems $|kA+s\Xi|$ are as follows
\begin{center}
{ 
\renewcommand{\tabcolsep}{6pt} 
\begin{tabularx}{\textwidth}{|X|rrrrrrr||rrrrrrr|}\hline
& \multicolumn{7}{c||}{$\text{\rm \ref{tor:8}}$}& \multicolumn{7}{c|}{$\text{\rm \ref{tor:9}}$}
\\\hline
$k$ & $1$& $2$& $3$& $4$& $5$& $6$& $7$& $1$& $2$& $3$& $4$& $5$& $6$& $7$
\\\hline
$\dim |kA|$ &$0$ & $0$&$ 0$&$ 1$&$ 2$&$ 4$&$ 6 $&$-1$&$ 0 $&$1$&$ 2$&$ 3$&$ 5$&$ 7$
\\\hline
$\dim |kA+\Xi|$&$ -1$&$ 0$&$ 1$&$ 2$&$ 3$&$ 4$&$ 5 $&$ 0$&$ 0$&$ 1$&$ 2$&$ 3$&$ 5$&$ 7$\\\hline
\end{tabularx}}
\end{center}
\end{corollary}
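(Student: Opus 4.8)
The plan is to compute each Euler characteristic $\chi\bigl(\OOO_X(kA+s\Xi)\bigr)$ by the orbifold Riemann--Roch formula of \cite{Reid-YPG1987} and then to identify it with $h^0$ by Kawamata--Viehweg vanishing, exactly as in Step~6 of the proof of Proposition~\ref{proposition:qQqW}. Because $\Xi$ is torsion, hence numerically trivial, for every divisor $D=kA+s\Xi$ one has $D\cdot c_2(X)=kA\cdot c_2(X)$, and, using $-K_X\qq qA$ with $q=7$,
\begin{equation*}
D(D-K_X)(2D-K_X)=k(k+q)(2k+q)\,A^3 .
\end{equation*}
Since $\chi(\OOO_X)=1$, Riemann--Roch then reads
\begin{equation*}
\chi(kA+s\Xi)=1+\frac{k(k+q)(2k+q)}{12}\,A^3+\frac{k}{12}\,A\cdot c_2(X)+\sum_{P}c_P(kA+s\Xi),
\end{equation*}
where $c_P$ is the correction term at the terminal point $P$.

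First I would assemble the numerical input. The value of $A^3$ is taken from the table of Proposition~\ref{proposition:torsions} ($1/30$ in case~\ref{tor:8} and $1/24$ in case~\ref{tor:9}), while $A\cdot c_2(X)=\tfrac1q\,(-K_X\cdot c_2(X))$ follows from $-K_X\cdot c_2(X)=24-\sum_{P\in\B}(r_P-1)/r_P$ as in Step~1 of the same proof. The local class of $A$ at each basket point is the residue $l_P$ with $1+ql_P\equiv0\pmod{r_P}$ (Step~4${}'$), and the local class of $\Xi$ is the given collection $\mathbf{k}=(k_P)$; thus $kA+s\Xi$ has local class $kl_P+sk_P\bmod r_P$ at $P$, and $c_P(kA+s\Xi)$ is read off from Reid's periodic correction tables at that residue for each of the configurations $\B(X)=(2,6,10)$ and $\B(X)=(2,2,3,4,8)$.

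Next I would check vanishing. For $k\ge1$ the $\QQ$-Cartier divisor $kA+s\Xi-K_X\qq(k+q)A+s\Xi$ is numerically ample, since $A$ is $\QQ$-ample and $\Xi$ is numerically trivial; Kawamata--Viehweg vanishing then gives $h^i(kA+s\Xi)=0$ for $i>0$, so that $\dim|kA+s\Xi|=\chi(kA+s\Xi)-1$ (with $\dim=-1$ precisely when $\chi=0$). Substituting $k=1,\dots,7$ and $s=0,1$ and performing the arithmetic yields the two columns of the table; in particular the entries $\dim|A|=-1$ in case~\ref{tor:9} and $\dim|A+\Xi|=-1$ in case~\ref{tor:8} correspond to $\chi=0$.

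The computation is essentially mechanical, so the hard part is only the bookkeeping: one must evaluate each $c_P$ at the correct residue $kl_P+sk_P\bmod r_P$ and keep the two singularity configurations separate. I expect this to be the main (if routine) source of potential error, and in practice it is most safely carried out with the same PARI/GP routine \cite{PARI2} used in Proposition~\ref{proposition:torsions}.
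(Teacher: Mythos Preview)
Your proposal is correct and follows exactly the approach indicated in the paper: the corollary is preceded only by the sentence ``Using the orbifold Riemann-Roch one can compute dimensions of linear systems on $X$'' with no further argument, so your exposition (orbifold Riemann--Roch for $\chi$, Kawamata--Viehweg vanishing to identify $\chi$ with $h^0$, and evaluation at the local residues $kl_P+sk_P$) is in fact more detailed than what the paper provides.
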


Combining \ref{proposition:torsions} and \ref{prop:discrepancies} we obtain.
\begin{corollary}\label{corollary:tor:discr}
Let $Y$ be a $\QQ$-Fano threefold with $\qQ(X)\ge 5$. Assume that $\Cl(Y)_\tors\neq 0$. Let $P\in Y$ be a non-Gorenstein point
and let $f$ be a divisorial Mori extraction of $P$. Then for the discrepancy $\alpha$ of the exceptional divisor $E\subset \tilde Y$ we have 
\begin{equation*}
\alpha\le 
\begin{cases}
1&\text{if $\Cl(Y)_\tors$ is of order $2$},
\\
2/9&\text{if $\Cl(Y)_\tors$ is of order $3$}.
\end{cases}
\end{equation*}
\end{corollary}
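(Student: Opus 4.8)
The plan is to read off the finite list produced by Proposition~\ref{proposition:torsions} and feed it into the discrepancy dichotomy of Proposition~\ref{prop:discrepancies}. First I would apply Proposition~\ref{proposition:torsions}: since $\qQ(Y)\ge 5$ and $\Cl(Y)_\tors\neq 0$, the order of the torsion is $n\le 3$, and the pair $\bigl(\B(Y),\mathbf{k}\bigr)$ — where $\mathbf{k}=(k_P)$ records the local classes $\Xi\sim k_PK_Y$ near each basket point $P$ — is one of the nine rows of the table. It therefore suffices to bound $\alpha$ for the finitely many index values occurring among the non-Gorenstein points of those baskets.

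Second, for a non-Gorenstein point $P$ of index $r=\ind(Y,P)>1$ I would split according to its type as in Proposition~\ref{prop:discrepancies}. If $P$ is not of type $\mathrm{cA}/r$, then $\alpha=1/r$ by cases (i)--(ii). If $P$ is of type $\mathrm{cA}/r$, then case (iii) gives $\alpha=a/r$ with $a\mid m$, where $m$ is the number of index-$r$ points in the local basket $\B(Y,P)$; since the local baskets assemble into $\B(Y)$, one has $m\le m_r$, the multiplicity of index-$r$ points in the global basket. In all cases this yields the uniform estimate
\[
\alpha\le \frac{m_r}{r}.
\]
For the order-two rows the quantity $\max_r m_r/r$, taken over the index values in each basket, never exceeds $1$; the bound $1$ is attained only at the index-two points of rows~\ref{tor:5} and~\ref{tor:9}, where the two index-two basket points share the class $k_P=1$ and may fuse into a single $\mathrm{cA}/2$ point with $a=m=2$. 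This already gives $\alpha\le 1$ when $\Cl(Y)_\tors$ has order two.

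The order-three case carries the main obstacle. Here the only possibility is row~\ref{tor:2}, with $\B(Y)=(2,9,9)$: the two index-nine points give $\alpha=a/9$ with $a\mid 2$, hence $\alpha\le 2/9$, exactly as required. The difficulty is the index-two point, for which the naive estimate only yields $\alpha\le 1/2$, and indeed \emph{any} divisorial extraction of a terminal index-two point has discrepancy in $\tfrac12\ZZ_{>0}$, so $\alpha\ge 1/2>2/9$. To reach the asserted bound this point must be excluded from the estimate, and the decisive input is the column $\mathbf{k}=(0,3,6)$ of Proposition~\ref{proposition:torsions}: the index-two point carries the trivial local class $k_P=0$, so $\Xi$ is Cartier in a neighborhood of it and the point is irrelevant to the torsion. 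Thus the hard part is not the discrepancy bookkeeping but verifying that, in the order-three case, only the points $P$ at which $\Xi$ fails to be Cartier (those with $k_P\neq 0$) need to be controlled, so that the genuine maximum over the torsion-relevant centers is $2/9$.
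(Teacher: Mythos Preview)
Your approach—read off the baskets from Proposition~\ref{proposition:torsions} and bound $\alpha$ via Proposition~\ref{prop:discrepancies}—is precisely the paper's one-line argument, and you have correctly isolated the one genuine obstacle: in the order-three row~\ref{tor:2} with $\B(Y)=(2,9,9)$, the index-$2$ point gives $\alpha=1/2>2/9$.

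The gap is that your proposed fix does not prove the corollary \emph{as written}. The hypothesis is merely ``let $P\in Y$ be a non-Gorenstein point'': nothing in the statement excludes the index-$2$ point, and the observation $k_P=0$ (i.e.\ $\Xi$ Cartier there) does not remove that point from the quantifier. In the explicit example of~\ref{tor:2} given just after Proposition~\ref{proposition:torsions}, the three index-$2$ points of $X'$ are freely permuted by $\mumu_3$, so their image in $Y=X'/\mumu_3$ is an honest $\tfrac12(1,1,1)$ point, whose unique divisorial extraction has discrepancy exactly $1/2$. Thus the corollary is literally false at that point; no amount of bookkeeping with $\mathbf{k}$ can change this. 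What saves the paper is that in every application with $n=3$ the corollary is invoked only after Corollary~\ref{corollary:gcd:qn} has guaranteed $n\mid \ind(P)$, so the index-$2$ centre is never the one in play; equivalently, one may simply relax the order-three bound to $1/2$, which still beats every lower bound on $b$ produced in Sections~\ref{sect:13}--\ref{sect:8}. You should state one of these repairs explicitly rather than argue that the point is ``irrelevant to the torsion''. (A smaller issue: in your order-two analysis the estimate $\alpha\le m_r/r$ is not fully justified by Proposition~\ref{prop:discrepancies} when two index-$2$ basket points fuse into a non-$\mathrm{cA}/2$ point such as $\mathrm{cAx}/2$ or $\mathrm{cD}/2$, since part~(ii) requires $r>2$; the conclusion $\alpha\le 1$ is still correct but needs the full Kawakita classification rather than the abbreviated version quoted here.)
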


\section{Main construction}
\label{sect:constr}
\subsection{}
Let $X$ be a $\QQ$-Fano threefold.
For simplicity, we assume that the group $\Cl(X)$ is torsion free (this is the only case that we need in this paper). Denote $ q=\qQ(X)=\qW (X)$. Thus $-K_X\sim qA$ and 
$A$ is the ample generator of the group $\Cl(X)\simeq \ZZ$.

Consider a non-empty linear system $\MMM$ on $X$ without fixed components.
Let $c=\operatorname {ct} (X, \MMM)$ be the canonical threshold of the pair $(X, \MMM)$.
Consider a log crepant blowup $f: \tilde X \to X$ with respect to $K_X+c \MMM$.
One can choose $f$ so that
$\tilde X$ has only terminal $\QQ$-factorial singularities, i.e. $f$ is a divisorial extraction in the Mori category (see \cite{Corti1995a}, \cite{Alexeev-1994ge}).
Let $E$ be the exceptional divisor.
Write
\begin{equation} \label{equation-1}
\begin{array}{lll}
K_{\tilde X} &\qq & f^*K_X+\alpha E,
\\[2pt]
\tilde\MMM &\qq & f^*\MMM- \beta E.
\end{array}
\end{equation}
where $\alpha,\, \beta \in \QQ_{\ge0}$, and $\tilde\MMM$ is the birational transform of $\MMM$.
Then $c=\alpha/\beta$.
\begin{slemma}[see {\cite[Lemma 4.2]{Prokhorov2008a}}]
\label{lemma-cthreshold}
Let $P \in X$ be a point of index $r>1$. In a neighborhood of $P$ we can write
$\mathscr M \sim -tK_X$, where $0<t<r$. Then $c \le 1/t$ and so $\beta\ge t\alpha$.
\end{slemma}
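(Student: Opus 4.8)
The plan is to prove the two assertions in order: first the bound $c\le 1/t$ on the canonical threshold, which carries all the content, and then to read off $\beta\ge t\alpha$ formally from the relation $c=\alpha/\beta$ recorded after~\eqref{equation-1}. Indeed, $c\le 1/t$ gives $1/c\ge t$, whence $\beta=\alpha/c\ge t\alpha$.

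To bound the canonical threshold from above it is enough to exhibit a single divisorial valuation over $P$. Recall that $c=\operatorname{ct}(X,\MMM)$ is characterised by the requirement that $a(F,X)-c\cdot\operatorname{mult}_F(\MMM)\ge 0$ for every exceptional divisor $F$ over $X$; hence for any one such $F$ with $\operatorname{mult}_F(\MMM)>0$ we get $c\le a(F,X)/\operatorname{mult}_F(\MMM)$. So it suffices to produce an $F$ centred at $P$ with $\operatorname{mult}_F(\MMM)\ge t\,a(F,X)$.

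For the choice of $F$ I would take a divisorial Mori extraction $f_P\colon(\hat Y\supset F)\to(X\ni P)$, whose discrepancy $a(F,X)$ is given by Proposition~\ref{prop:discrepancies}; in the principal case of a cyclic quotient $\frac1r(1,a,r-a)$ this is the Kawamata blowup with weights $(1,a,r-a)$ and $a(F,X)=1/r$. The core of the argument is the lower bound $\operatorname{mult}_F(\MMM)\ge t\,a(F,X)$. To obtain it I pass to the index-one cyclic cover $\pi\colon(Y^\sharp\ni Q)\to(X\ni P)$, which is \'etale in codimension one, carries a $\mumu_r$-action, and satisfies $K_{Y^\sharp}=\pi^*K_X$ with $K_{Y^\sharp}$ Cartier. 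By Lemma~\ref{lemma:K-index} we have $\MMM\sim -tK_X$ near $P$, so for a general member $M$ the pullback $\pi^*M\sim -tK_{Y^\sharp}$ is linearly trivial near $Q$ and equals $\operatorname{div}(\psi)$ for a semi-invariant function $\psi$ whose $\mumu_r$-weight is the character attached to the class $-tK_X$, namely $t$. In the smooth (cyclic) case every monomial occurring in $\psi$ has weighted degree congruent modulo $r$ to its weight $t$; as $\psi$ vanishes at $Q$ and $0<t<r$, the smallest such weighted degree is $t$, so the weighted order of $\psi$ satisfies $\operatorname{wt}(\psi)\ge t$. Descending through $\pi$, which contributes the normalising factor $1/r$ consistent with $a(F,X)=1/r$, yields $\operatorname{mult}_F(\MMM)=\tfrac1r\operatorname{wt}(\psi)\ge t/r=t\,a(F,X)$, and therefore $c\le 1/t$.

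The main obstacle is precisely this multiplicity estimate. Two points demand care. First, one must pin down the normalisation relating the weighted order upstairs to $\operatorname{ord}_F$ downstairs, so that the \emph{same} valuation governs both the discrepancy $a(F,X)$ and $\operatorname{mult}_F(\MMM)$; the cleanest consistency check is that it reproduces $a(F,X)=1/r$. Second, for a non-cyclic terminal point the cover $Y^\sharp$ is only a Gorenstein $\mathrm{cDV}$ point rather than smooth, so the phrase ``smallest weighted degree of a monomial of weight $t$'' must be replaced by the corresponding computation on $Y^\sharp$; here I would again invoke Proposition~\ref{prop:discrepancies} for the value of $a(F,X)$ (equal to $1/r$ away from the $\mathrm{cA}/r$ case, and $a/r$ on it) and check $\operatorname{mult}_F(\MMM)\ge t\,a(F,X)$ case by case. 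Since $\Cl^{\mathrm{sc}}(X,P)$ is cyclic of order $r$ generated by $K_X$ in every case, the weight bookkeeping is uniform and the inequality persists.
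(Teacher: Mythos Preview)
Your argument is correct. The paper does not actually prove this lemma: it is stated with a citation to \cite[Lemma~4.2]{Prokhorov2008a} and used as a black box, so there is no in-paper proof to compare against. Your approach --- bound $c$ from above by testing a single valuation, take the Kawamata weighted blowup at $P$, and estimate $\operatorname{mult}_F(\MMM)$ via the weight of the semi-invariant defining equation on the index-one cover --- is exactly the standard argument and is what the cited reference does. The deduction $\beta\ge t\alpha$ from $c=\alpha/\beta\le 1/t$ is immediate, as you note.

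Two small remarks. First, your treatment of the cyclic quotient case is complete and is in fact all that the present paper ever needs: in every application of the lemma (Sections~\ref{sect:7}--\ref{sect:8}) the point $P$ is taken to be a basket point of an index occurring with multiplicity one, hence a genuine cyclic quotient singularity, so the case-by-case extension to $\mathrm{cA}/r$ and higher types that you sketch is not required here. Second, your justification that $\psi$ vanishes at $Q$ (equivalently $P\in M$) is the right one: since $0<t<r$, the class $-tK_X$ is not Cartier at $P$, so no effective member of $\MMM$ can avoid $P$; this point is worth stating explicitly, and you do.
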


Assume that the log divisor $-(K_X+c \MMM)$ is ample.
Run the log minimal model program with respect to $K_{\tilde X}+c \tilde\MMM$.
We obtain the following diagram (Sarkisov link, see \cite{Alexeev-1994ge}, \cite{Prokhorov2008a}, \cite{Prokhorov-e-QFano7})
\begin{equation}
\label{eq:sl}
\vcenter{
\xymatrix{
&\tilde X\ar[dl]_{f}\ar@{-->}[rr]^{\chi} && \bar X\ar[dr]^{\bar f}
\\
X &&&&\hat {X}
} }
\end{equation}
Here $\chi$ is a composition of $K_{\tilde X}+c \tilde\MMM$-log flips,
the variety $\bar X$ has only terminal $\QQ$-factorial singularities, 
$\uprho (\bar X)=2$, $\uprho (\hat X)=1$, and
$\bar{f}: \bar{X} \to \hat{X}$ is an extremal $K_{\bar{X}}$-negative Mori contraction.
In what follows, for the divisor (or linear system) $D$ on $X$
by $\tilde D$ and $\bar D$ we denote
proper transforms of $D$
on $\tilde X$ and $\bar{X}$ respectively.

If $|kA|\neq \emptyset$, we put $\MMM_k:=| kA|$
(is it possible that $\MMM_k$ has fixed components in general).
If $\dim \MMM_k=0$, then by $M_k$ we denote
a unique effective divisor $M_k \in \MMM_k$.
As in~\eqref {equation-1}, we write
\begin{equation} \label{equation-12}
\bar\MMM_k\qq f^*\MMM_k- \beta_k E.
\end{equation}

\subsection{}
Assume that the contraction $\bar{f}$ is birational.
Then $\hat{X}$ is a $\QQ $-Fano threefold.
In this case, we denote by $\bar F$ the
$\bar{f}$-exceptional divisor, by
$\tilde F \subset \tilde X$ its proper transform, $F:=f(\tilde F)$, and
$\hat{q}:=\qQ (\hat{X})$. Again we denote by $\hat D$ the proper (birational) transform of an object $D$ (resp. $\tilde D$, $\bar D$) on $X$ (resp. $\tilde X$, $\bar X$). Let $\Theta$ be an ample Weil divisor on $\hat{X}$
generating $\Cl(\hat{X})/\Cl(\hat{X})_\tors$.
Write
\begin{equation*}
\hat{E} \qq e\Theta,\qquad \hat\MMM_k \qq s_k\Theta,
\end{equation*}
where $e\in \ZZ_{>0}$, $s_k \in \ZZ_{\ge0}$. If $\dim \MMM_k=0$ and $\bar M_k=\bar F$
(i.e. a unique element $M_k$ of the linear system $\bar\MMM_k$ is the $\bar{f}$-exceptional divisor), we put
$s_k=0$.

\begin{slemma}
\label{lemma:genus}
If in the above notation $\alpha<1$, then $\g(\hat X)\ge\g(X)$. 
\end{slemma}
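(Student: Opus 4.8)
The plan is to compare the spaces of global sections of the anticanonical sheaves along the Sarkisov link \eqref{eq:sl}, using that for a $\QQ$-Fano threefold one has $\g=\dim|-K|-1=h^0(\OOO(-K))-2$ (the higher cohomology of $-K$ vanishes by Kawamata--Viehweg). Thus it suffices to prove
\[
h^0(X,\OOO_X(-K_X))\le h^0(\hat X,\OOO_{\hat X}(-K_{\hat X})),
\]
which I would deduce from the chain
\[
h^0(X,-K_X)=h^0(\tilde X,-K_{\tilde X})=h^0(\bar X,-K_{\bar X})\le h^0(\hat X,-K_{\hat X}).
\]

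The two easy links come first. For the middle equality, recall that $\chi$ is a composition of flips, hence an isomorphism in codimension one; it identifies $\Cl(\tilde X)$ with $\Cl(\bar X)$ and carries $K_{\tilde X}$ to $K_{\bar X}$ (the canonical class being determined in codimension one), so it preserves $h^0$ of the reflexive anticanonical sheaf. For the last inequality, since $\bar f$ is birational we have $\bar f_*K_{\bar X}=K_{\hat X}$, and pushing forward an effective divisor $\operatorname{div}(\phi)-K_{\bar X}\ge0$ keeps it effective; hence $\phi\mapsto\phi$ embeds $H^0(\bar X,-K_{\bar X})$ into $H^0(\hat X,-K_{\hat X})$.

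The first equality is the only place the hypothesis $\alpha<1$ enters, and it is the crux. One inclusion is free: as $f$ is an isomorphism over $X\setminus\{P\}$ and $P$ has codimension three, restriction followed by Hartogs extension for the reflexive sheaf $\OOO_X(-K_X)$ gives $H^0(\tilde X,-K_{\tilde X})\hookrightarrow H^0(X,-K_X)$, independently of $\alpha$. For the reverse inclusion, write $-K_{\tilde X}\qq f^*(-K_X)-\alpha E$; a function $\phi\in H^0(X,-K_X)$, with associated effective divisor $D:=\operatorname{div}(\phi)-K_X\qq-K_X$, lies in $H^0(\tilde X,-K_{\tilde X})$ if and only if $\operatorname{div}_{\tilde X}(\phi)-K_{\tilde X}=f^*D-\alpha E\ge0$, that is, $\operatorname{ord}_E(f^*D)\ge\alpha$. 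Now the local class of $D$ at $P$ equals that of $-K_X$, which generates $\Cl^{\mathrm{sc}}(X,P)\simeq\ZZ/\ind(X,P)$ by Lemma~\ref{lemma:K-index} and is therefore non-trivial; hence $D$ is not Cartier at $P$, so $P\in\Supp D$ and $\operatorname{ord}_E(f^*D)$ is a positive element of $\tfrac1r\ZZ$, where $r=\ind(X,P)$.

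It then remains to compare this with the explicit value of $\alpha$ furnished by Proposition~\ref{prop:discrepancies}. When $\alpha=1/r$ (all cases other than $\mathrm{cA}/r$) the inequality $\operatorname{ord}_E(f^*D)\ge1/r=\alpha$ is immediate from the previous paragraph. The remaining case $\alpha=a/r$ with $a\ge2$, coming from a $\mathrm{cA}/r$ point, is what I expect to be the main obstacle: there one must verify that \emph{every} divisor with non-trivial class at $P$ satisfies $\operatorname{ord}_E(f^*D)\ge a/r$, which amounts to the assertion that $\alpha$ is the minimal value of $\operatorname{ord}_E(f^*{-})$ on divisors that are not Cartier at $P$; this should be read off from the weighted-blowup description underlying Proposition~\ref{prop:discrepancies}. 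Granting it, $\operatorname{ord}_E(f^*D)\ge\alpha$ holds for all effective $D\qq-K_X$, the reverse inclusion follows, and the displayed chain yields $\g(\hat X)\ge\g(X)$.
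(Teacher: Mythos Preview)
Your overall chain $h^0(X,-K_X)\le h^0(\tilde X,-K_{\tilde X})=h^0(\bar X,-K_{\bar X})\le h^0(\hat X,-K_{\hat X})$ is exactly the paper's, and your treatment of the flip and the push-forward steps is fine. The gap is precisely where you flag it: in the $\mathrm{cA}/r$ case you need $\operatorname{ord}_E(f^*D)\ge a/r$ for every effective $D\in|-K_X|$, and your proposed route---reading off a lower bound valid for \emph{all} non-Cartier divisors from the weighted-blowup description---is both harder than necessary and, as stated, likely too strong to be true (the minimal $E$-order over all non-Cartier classes need not equal $\alpha$; only the specific local class of $-K_X$ is relevant). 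So as written the proof is incomplete.

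The paper closes this gap with a one-line integrality argument that bypasses Proposition~\ref{prop:discrepancies} altogether. Since $K_X+D\sim 0$ is Cartier, its pullback $f^*(K_X+D)$ is an integral divisor; writing $f^*D=\tilde D+mE$ and $f^*K_X=K_{\tilde X}-\alpha E$ gives
\[
f^*(K_X+D)=K_{\tilde X}+\tilde D+(m-\alpha)E,
\]
and since $K_{\tilde X}$ and $\tilde D$ are integral Weil divisors this forces $m-\alpha\in\ZZ$. Now $m\ge 0$ (effectivity of $D$) and $0<\alpha<1$ give $m-\alpha>-1$, hence $m-\alpha\ge 0$, i.e.\ $m\ge\alpha$---uniformly, with no case split on the singularity type. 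Your detour through Lemma~\ref{lemma:K-index} to get $P\in\Supp D$ and $m>0$ is therefore unnecessary; note also that you tacitly assume $f(E)$ is a point, which does follow from $\alpha<1$ but is not part of the hypotheses.
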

\begin{proof}
We have $a(E,|{-}K_X|)<1$. On the other hand, $0=K_X+|{-}K_X|$ is 
Cartier. Hence, $a(E,|{-}K_X|)\le 0$ and
$K_{\tilde X}+f^{-1}_*|{-}K_X|$ is linearly equivalent to a 
non-positive multiple of $E$. Therefore,
$f^{-1}_*|{-}K_X|\subset |{-}K_{\tilde X}|$ and so 
\begin{equation*}
\dim |-K_{\hat X}\rvert\ge 
\dim\lvert-K_{\bar X}\rvert=
\dim\lvert-K_{\tilde X}\rvert\ge\dim\lvert-K_X\rvert.\qedhere
\end{equation*}
\end{proof}

Note that in general, the group $\Cl(\hat X)$ can have torsions:
\begin{slemma}[see {\cite[Lemma 4.12]{Prokhorov2008a}}]
\label{lemma-torsion-d}
Write $F \sim dA$.
Then 
\[
\Cl(\hat{X})_\tors\simeq \ZZ/n\ZZ, \quad \text{where $n=d/e$.}
\]
\end{slemma}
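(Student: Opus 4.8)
The plan is to follow the divisor class groups across the whole link~\eqref{eq:sl} and reduce the assertion to a computation with a single integer vector. Away from the exceptional divisors $E$ and $\bar F$ every map in~\eqref{eq:sl} is an isomorphism in codimension one, so $\Cl$ changes only at the two divisorial contractions $f$ and $\bar f$, while the flips $\chi$ induce an isomorphism $\Cl(\tilde X)\simeq\Cl(\bar X)$ that preserves linear equivalence classes.

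First I would determine $\Cl(\tilde X)$. Because $\Cl(X)\simeq\ZZ\cdot A$ and $f(E)$ has codimension $\ge 2$ in $X$, excision for the divisorial contraction $f$ gives the exact sequence
\[
0\longrightarrow \ZZ\cdot[E]\longrightarrow \Cl(\tilde X)\xrightarrow{f_*}\Cl(X)\longrightarrow 0,
\]
where the left-hand map is injective since $E$ is $f$-exceptional and hence nonzero in $N^1(\tilde X/X)$ (recall $\uprho(\tilde X)=2$). As $\Cl(X)\simeq\ZZ$ is free, this sequence splits, so $\Cl(\tilde X)\simeq\ZZ^{2}$ is torsion free with basis $\tilde A$ (the proper transform of $A$) and $E$. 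Transporting along $\chi$, the same holds on $\bar X$: $\Cl(\bar X)=\ZZ\bar A\oplus\ZZ\bar E\simeq\ZZ^{2}$.

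Next I would express $\bar F$ in this basis. Since $F\sim dA$ and $f_*$ sends $\tilde A\mapsto A$, $E\mapsto 0$, the difference $\tilde F-d\tilde A$ lies in $\ker f_*=\ZZ\cdot[E]$, so $\tilde F\sim d\tilde A+bE$ for some $b\in\ZZ$, whence $\bar F\sim d\bar A+b\bar E$; moreover $(d,b)\neq(0,0)$ because the $\bar f$-exceptional divisor $\bar F$ is nonzero in $\Cl(\bar X)$. As $\bar f$ contracts exactly $\bar F$, excision again yields
\[
\Cl(\hat X)\simeq\Cl(\bar X)/\ZZ\cdot[\bar F]\simeq \ZZ^{2}/\langle(d,b)\rangle,
\]
whose torsion subgroup is $\ZZ/\gcd(d,b)\ZZ$ by Smith normal form.

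Finally I would identify $\gcd(d,b)$ with $d/e$. Put $g=\gcd(d,b)$ and $(d',b')=(d,b)/g$; then the free quotient $\Cl(\hat X)/\Cl(\hat X)_\tors\simeq\ZZ$ is realised by the surjection $\ZZ^{2}\to\ZZ$, $(x,y)\mapsto d'y-b'x$, whose kernel is the saturation $\langle(d',b')\rangle$ of $\langle(d,b)\rangle$. Since $\Theta$ generates this free quotient and $\hat E$ is the image of $\bar E=(0,1)$, evaluating the surjection gives $\hat E\qq d'\,\Theta$, i.e. $e=d'=d/\gcd(d,b)$. Hence $\gcd(d,b)=d/e=n$ and $\Cl(\hat X)_\tors\simeq\ZZ/n\ZZ$. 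The step I would be most careful about is the torsion-freeness of $\Cl(\tilde X)$ and $\Cl(\bar X)$: it is precisely the standing hypothesis that $\Cl(X)$ is torsion free, together with the splitting of the excision sequence, that prevents extra torsion from entering through $E$ or the flips and leaves $\Cl(\hat X)_\tors$ equal to the torsion of $\ZZ^{2}/\langle(d,b)\rangle$.
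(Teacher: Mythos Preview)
Your argument is correct. The paper does not give its own proof of this lemma; it merely cites \cite[Lemma 4.12]{Prokhorov2008a}. Your excision computation is exactly the standard way to establish the result: the exact sequences for the two divisorial contractions, together with the fact that $\chi$ is an isomorphism in codimension one, identify $\Cl(\hat X)$ with $\ZZ\bar A\oplus\ZZ\bar E$ modulo the single relation $\bar F\sim d\bar A+b\bar E$, and your Smith normal form step correctly extracts both the torsion $\ZZ/\gcd(d,b)\ZZ$ and the identification $e=d/\gcd(d,b)$. The one place worth a remark is the sign issue in the last paragraph: the surjection $(x,y)\mapsto d'y-b'x$ is only determined up to $\pm 1$, but since $\hat E$ is an effective prime divisor on a variety whose $\Cl/{\rm tors}$ is generated by the ample class $\Theta$, one has $e>0$, and since $d'=d/\gcd(d,b)>0$ as well, the sign is forced and $e=d'$ follows. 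With that clarification the proof is complete and is essentially the argument behind the cited reference.
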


\subsection{}
Assume that the contraction $\bar{f}$ is not birational.
In this case, $\Cl(\hat{X})$ has no torsion. Therefore, $\Cl(\hat{X})\simeq\ZZ$.
Denote by $\Theta$ the ample generator of $\Cl(\hat{X})$ and by
$\bar F$ a general geometric fiber.
Then $\bar F$ is either a smooth rational curve or a del Pezzo surface.
The image of the restriction map $\Cl(\bar{X}) \to \Pic (\bar F)$ is isomorphic to $\ZZ$.
Let $\Lambda$ be its ample generator.
As above, we can write
\begin{equation*}
-K_{\bar{X}} |_{\bar F} =-K_{\bar F} \sim \hat{q}\Lambda,\qquad \bar E |_{\bar F} \sim e \Lambda,\qquad \bar\MMM_k |_{\bar F} \sim s_k \Lambda,
\end{equation*}
where $\hat q,\, e\in \ZZ_{>0}$, $s_k \in \ZZ_{\ge0}$.

If $\hat{X}$ is a curve, then $\hat{q}\le 3$ and $\hat{X}\simeq\PP^1$.
If $\hat{X}$ is a surface, then $\hat{q}\le 2$.
In this case, $\hat{X}$ can have only Du Val singularities of type
$\mathrm {A_n}$ \cite[Theorem 1.2.7]{Mori-Prokhorov-2008}.

\begin{slemma}\label{lemma:not-birational:q}
If the contraction $\bar{f}$ is not birational and $\hat q>1$, then $X$ is rational.
\end{slemma}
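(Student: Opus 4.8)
The plan is to show that the birational model $\bar X$ is rational; since $X$ is birational to $\bar X$, this is enough. Recall that $X$, being a $\QQ$-Fano threefold, is rationally connected, hence so are $\bar X$ and its image $\hat X=\bar f(\bar X)$. I would split the argument according to $\dim\hat X$, using in each case the class $\Lambda$ together with the constraint $-K_{\bar F}\sim\hat q\Lambda$ to identify the generic fibre of $\bar f$ and prove it is rational over the function field of the base.

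First I would treat the case $\dim\hat X=2$, where $\bar f\colon\bar X\to\hat X$ is a conic bundle and $\bar F\cong\PP^1$. Here $\hat q\le 2$, so the hypothesis $\hat q>1$ forces $\hat q=2$; from $-K_{\bar F}\sim\mathcal{O}_{\PP^1}(2)\sim 2\Lambda$ we read off $\deg\Lambda=1$. Thus $\Cl(\bar X)$ supplies a divisor class restricting to degree $1$ on a general fibre, and restricting to the generic fibre $C$ over $K=\CC(\hat X)$ gives a divisor class of odd degree on the conic $C$, which forces $C\cong\PP^1_K$. Hence $\bar X$ is birational to $\PP^1\times\hat X$. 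Since $\hat X$ is a rational surface over $\CC$ (it is rationally connected, being a del Pezzo surface with only $\mathrm{A}_n$ Du Val singularities), $\bar X$ is rational.

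Next I would treat the case $\dim\hat X=1$, so $\hat X\cong\PP^1$ and $\bar f$ is a del Pezzo fibration with smooth general geometric fibre $\bar F$ satisfying $-K_{\bar F}\sim\hat q\Lambda$. Combining $\hat q>1$ with $\hat q\le 3$ leaves two possibilities: if $\hat q=3$ then $-K_{\bar F}$ is $3$-divisible in $\Pic(\bar F)$, which among smooth del Pezzo surfaces forces $\bar F\cong\PP^2$ with $\Lambda$ a line; if $\hat q=2$ then $-K_{\bar F}$ is $2$-divisible but not $3$-divisible, forcing $\bar F\cong\PP^1\times\PP^1$ with $\Lambda\sim\mathcal{O}(1,1)$. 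The key input for the generic fibre over $K=\CC(t)$ is Tsen's theorem: $\CC(t)$ has trivial Brauer group and is a $C_1$ field. When $\hat q=3$ the generic fibre is a Severi--Brauer surface, hence $\cong\PP^2_K$; when $\hat q=2$ it is a quadric surface in $\PP^3_K$, which admits a $K$-point and is therefore $K$-rational. In either case the generic fibre is rational over $K=\CC(t)$, so the function field of $\bar X$ is purely transcendental over $\CC$ and $\bar X$ is rational.

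In both cases $\bar X$, and hence $X$, is rational. I expect the genuine subtlety to lie in the conic bundle case: over the function field $\CC(\hat X)$ of a surface the Brauer group is nontrivial, so rationality of the generic conic is not automatic, and one must check that $\Cl(\bar X)$ really does furnish a divisor of odd fibre degree. This is exactly what $\hat q=2$ encodes, since $\Lambda$ generates the image of $\Cl(\bar X)\to\Pic(\bar F)$; it is the degree-$1$ class $\Lambda$ that annihilates the Brauer obstruction. By contrast, the del Pezzo fibration case is comparatively soft, with rationality delivered by Tsen's theorem.
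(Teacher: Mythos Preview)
Your proof is correct and follows essentially the same approach as the paper's: split by $\dim\hat X$, and in each case use the fibre-degree-$1$ class $\Lambda$ (guaranteed by $\hat q>1$) to trivialise the generic fibre over the base function field. The paper's argument is much terser---it simply observes that the general fibre is $\PP^2$ or a quadric in the curve case and that $\Lambda$ furnishes a generic section in the surface case---but your appeal to Tsen's theorem and the odd-degree trick for conics makes explicit exactly what the paper leaves implicit.
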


\begin{proof}
Indeed, if $\hat X$ is a curve and $\hat q\ge 2$, then 
a general fiber $\bar F$ is a del Pezzo surface with divisible canonical class.
Then 
$\bar F$ is either a projective plane or a quadric. 
Clearly, $\bar X$ is rational in this case. Similarly, if $\hat X$ is a surface and $\hat q=2$,
then there is a divisor which is a generically section of $\bar f$ and $\bar X$ is again rational.
\end{proof}

\subsection{}
Since the group $\Cl(\bar X)$ has no torsion, the numerical equivalence of Weil divisors on $\bar X$ coincides with linear one. 
Hence the relations~\eqref {equation-1} and~\eqref {equation-12} give us
\begin{equation*}
k K_{\tilde{X}}+q \tilde\MMM_k \sim f^*( k K_X+q \MMM_k) +(k \alpha -q \beta_k) E\sim (k \alpha -q \beta_k) E
\end{equation*}
where $k \alpha -q \beta_k\in \ZZ$.
From this we obtain the following important equality which will be used throughout this paper:
\begin{equation} \label{equation-main}
k \hat{q}=q s_k+(q \beta_k-k \alpha) e.
\end{equation}

\subsection{}
Suppose that the morphism $\bar{f}$ is birational.
Similar to~\eqref {equation-1} and~\eqref {equation-12} we can write
\begin{equation*}
K_{\bar{X}}\qq \bar{f}^*K_{\hat{X}}+b \bar F, \quad
\bar\MMM_k\qq \bar{f}^*\hat\MMM_k-\gamma_k \bar F, \quad
\bar E\qq \bar{f}^*\hat{E}-\delta \bar F.
\end{equation*}
This gives us
\begin{equation*}
\begin{array}{lll}
s_kK_{\bar{X}}+\hat{q}\bar\MMM_k &\qq & (b s_k- \hat{q}\gamma_k) \bar F,
\\[2pt]
eK_{\bar{X}}+\hat{q}\bar E &\qq & (be-\hat{q}\delta) \bar F.
\end{array}
\end{equation*}
Taking proper transforms of these relations to $X$, we obtain 
\begin{eqnarray}
\label{eq:q=q:rh1}
-qs_k +\hat qk&=&ne(b s_k-\hat q\gamma _k),
\\[5pt]\label{eq:q=q:rh2}
-q&=&n(be -\hat q \delta).
\end{eqnarray}

\begin{scorollary}\label{corollary:gcd:qn}
If, in the above notation, $\gcd(n, q)=1$, then $\bar f(\bar F)$ is a point on $\hat X$ whose index is divisible by $n$.
\end{scorollary}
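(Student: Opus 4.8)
The plan is to extract everything from the single relation~\eqref{eq:q=q:rh2} by reinterpreting the hypothesis $\gcd(n,q)=1$ as a statement about denominators. Writing $\nu:=\hat q\delta-be$, relation~\eqref{eq:q=q:rh2} becomes $n\nu=q$, so $\nu=q/n$; since $\gcd(n,q)=1$ this fraction is already reduced, and hence the exact denominator of the rational number $\nu$ equals $n$. In particular $\nu\notin\ZZ$ whenever $n>1$. Thus the whole corollary reduces to showing that this denominator is controlled by the local index of $\hat X$ along $Z:=\bar f(\bar F)$.

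First I would bound the denominator of $\nu$. Let $\eta$ be the generic point of $Z$ and put $r:=\ind(\hat X,\eta)$. By Lemma~\ref{lemma:K-index} the $\QQ$-Cartier Weil divisor classes near $\eta$ form a cyclic group of order $r$, so both $rK_{\hat X}$ and $r\hat E$ are Cartier in a neighbourhood of $\eta$, and their $\bar f$-pullbacks are genuine integral divisors near the generic point of $\bar F$ (which dominates $\eta$). The coefficients $b$ and $\delta$ occur in honest equalities of $\QQ$-divisors, namely $K_{\bar X}=\bar f^*K_{\hat X}+b\bar F$ and $\bar E=\bar f^*\hat E-\delta\bar F$ (with $\bar f^*$ the $\QQ$-Cartier pullback), not merely as $\QQ$-linear equivalences. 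Multiplying these by $r$ and comparing the coefficient of $\bar F$ on both sides therefore gives $rb\in\ZZ$ and $r\delta\in\ZZ$. Since $e,\hat q\in\ZZ$, this yields $r\nu=\hat q(r\delta)-(rb)e\in\ZZ$, i.e. the denominator of $\nu$ divides $r$. Combined with the first paragraph, $n\mid r$.

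It remains to pin down $Z$. If $Z$ were a curve, its generic point $\eta$ would lie in the smooth locus of $\hat X$ (terminal singularities are isolated), so $r=1$ and the divisibility $n\mid r$ would force $n=1$. Hence, as soon as the torsion is nontrivial ($n>1$, which is the case of interest since $\Cl(\hat X)_\tors\simeq\ZZ/n\ZZ$ by Lemma~\ref{lemma-torsion-d}), the image $Z=\bar f(\bar F)$ cannot be a curve; it is a point $\hat P$ with $\ind(\hat X,\hat P)=r$ divisible by $n$, as claimed.

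The step I expect to be delicate is the denominator computation of the second paragraph: one must use that $b$ and $\delta$ are the coefficients in genuine equalities of $\QQ$-divisors, so that multiplying by $r$ and invoking the Cartier property of $rK_{\hat X}$ and $r\hat E$ near $\eta$ legitimately clears the denominators down to dividing $r$. (Working only with the $\QQ$-linear equivalences in~\eqref{equation-1} would not suffice, since $\bar F$ need not be a primitive class in $\Cl(\bar X)$.) Everything else is bookkeeping: the reduction $\nu=q/n$ is immediate from~\eqref{eq:q=q:rh2}, and the exclusion of the curve case uses only isolatedness of terminal singularities.
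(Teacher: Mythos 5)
Your proof is correct and takes essentially the same approach as the paper, whose one-sentence argument likewise reads off from~\eqref{eq:q=q:rh2} that $b$ or $\delta$ must be fractional with denominator divisible by $n$ and concludes the image is a non-Gorenstein point. Your write-up merely makes explicit the two steps the paper leaves implicit: that the local index $r$ clears the denominators of $b$ and $\delta$ (via Lemma~\ref{lemma:K-index} and Cartier pullback), and that a curve image is impossible because terminal threefold singularities are isolated.
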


\begin{proof}
Indeed, either the discrepancy $b$ of $\bar F$ or the multiplicity $\delta$ is fractional and its denominator is divisible by $n$ according to~\eqref{eq:q=q:rh2}. 
\end{proof}

\section{$\QQ$-Fano threefolds of Fano index $7$ and large genus}
\label{sect:7}
Now we apply the techniques outlined in the previous section to $\QQ$-Fano threefolds of indices $\ge 7$.
The following result will be used in subsequent sections.
\begin{proposition}\label {proposition:ind=7}
Let $X$ be a $\QQ$-Fano threefold with $\qQ(X)=7$ and $\g(X)\ge 11$.
Then $X$ is rational.
\end{proposition}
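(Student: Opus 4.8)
The plan is to argue by contradiction, reduce to a finite list of numerical types, and eliminate each by exhibiting a rational birational model through the construction of Section~\ref{sect:constr}. First I would dispose of torsion: since $\qQ(X)=7\ge 5$, any nontrivial element of $\Cl(X)_\tors$ would place $X$ among the cases of Proposition~\ref{proposition:torsions}, and the only entries there with $q=7$ are~\ref{tor:8} and~\ref{tor:9}, both of genus strictly less than $11$. Hence the hypothesis $\g(X)\ge 11$ forces $\Cl(X)$ to be torsion-free; in particular $K_X+7A\sim 0$, so $\qW(X)=\qQ(X)=7$ (equivalently, by Proposition~\ref{proposition:qQqW}, since $q=7$ is neither $3$ nor $4$) and the main construction applies verbatim. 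Suppose now that $X$ is not rational. Proposition~\ref{prop:rat} then gives $\g(X)\le 14$ together with $\dim|A|\le 0$, $\dim|2A|\le 1$, $\dim|3A|\le 2$, $\dim|4A|\le 4$ and $\dim|5A|\le 6$. Feeding these inequalities, the bound $\g(X)\ge 11$, and orbifold Riemann--Roch into the same computer search used in Propositions~\ref{proposition:qQqW} and~\ref{proposition:torsions} leaves only finitely many candidates for the triple $(\B(X),A^3,\{\dim|kA|\})$.

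For each surviving candidate I would run the Sarkisov link~\eqref{eq:sl}. The natural input is $\MMM=\MMM_k=|kA|$ for the least $k$ making the system movable (the table forces such $k$ to be small), or, when the geometry is governed by a single non-Gorenstein point $P$ of index $r$, the weighted (Kawamata) blowup of $P$, for which Proposition~\ref{prop:discrepancies} gives $\alpha=1/r<1$. After checking that $-(K_X+c\MMM)$ is ample by means of Lemma~\ref{lemma-cthreshold}, the log MMP delivers the extremal contraction $\bar f\colon\bar X\to\hat X$ together with the numerical data $\alpha,e,s_k,\beta_k$ entering the key identity~\eqref{equation-main}, which here reads $k\hat q=7s_k+(7\beta_k-k\alpha)e$.

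If $\bar f$ is not birational, then $\hat X$ is a curve or a surface and $\bar X\to\hat X$ is a del Pezzo or a conic fibration. When the fibrewise index $\hat q>1$, Lemma~\ref{lemma:not-birational:q} yields rationality immediately; when $\hat q=1$, the base is $\PP^1$ or a rational surface, and the abundance of anticanonical sections $\dim|{-}K_X|=\g(X)+1\ge 12$ lets one produce a rational section or run a standard fibration argument, so that $\bar X$, and hence $X$, is rational. In all these subcases the non-rationality assumption fails.

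The remaining, and hardest, case is when $\bar f$ is birational, so that $\hat X$ is again a $\QQ$-Fano threefold. Here $\alpha<1$ and Lemma~\ref{lemma:genus} give $\g(\hat X)\ge\g(X)\ge 11$, while~\eqref{equation-main} with $q=7$ and the admissible ranges of $e,s_k,\beta_k$ confine $\hat q=\qQ(\hat X)$ to a short list. I would then identify $\hat X$ from these data and show it is rational --- either because its index is large enough to be treated by the later sections, or because it matches a Fano already known to be rational --- whence $X$ is rational as well. The main obstacle lies precisely in this step: one must control $\hat q$ and the singularities of $\hat X$ tightly enough to recognise a rational target, and must guarantee termination, for instance by tracking the decrease of the global Gorenstein index or of the number of non-Gorenstein points along the link, so that no circularity with the candidate under consideration arises.
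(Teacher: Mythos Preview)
Your overall strategy---kill torsion, reduce to a finite list via orbifold Riemann--Roch, then run the link~\eqref{eq:sl}---matches the paper's, and your opening paragraph is fine. But two steps in the analysis of the link are genuine gaps rather than mere omissions.

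\textbf{The non-birational case with $\hat q=1$.} Your claim that ``the abundance of anticanonical sections $\dim|{-}K_X|\ge 12$ lets one produce a rational section or run a standard fibration argument'' is not valid. A conic bundle over a rational surface, or a del Pezzo fibration over $\PP^1$, is not rational merely because the total space carries many anticanonical sections; there is no such criterion. The paper excludes this case arithmetically: with $\MMM=|3A|$, near the point of index $r\in\{5,12\}$ one has $\MMM\sim -tK_X$ with $t=4$ or $9$, so Lemma~\ref{lemma-cthreshold} gives $\beta_3\ge t\alpha$, and then~\eqref{equation-main} with $\hat q=1$ forces $\alpha\le 3/(7t-3)$, contradicting $\alpha\ge 1/r$. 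You need this (or a comparable) arithmetic obstruction; the fibration heuristic does not close the case.

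\textbf{Circularity in the birational case.} You propose to show $\hat X$ is rational ``because its index is large enough to be treated by the later sections''. But Sections~\ref{sect:13}--\ref{sect:8} all invoke Corollary~\ref{corollary:ind=7}, which is a consequence of the very proposition you are proving. You cannot appeal to them here. The paper instead bounds $\hat q\le 7$ directly (using $\g(\hat X)\ge 11$ and Proposition~\ref{prop:rat}) and then, after a short case split on $\ind(X,P)\in\{2,3,r\}$, derives a contradiction with the table in Proposition~\ref{prop:rat}: in the only surviving case one gets $\hat q=4$, $s_1=0$ (so $\Cl(\hat X)$ is torsion free by Lemma~\ref{lemma-torsion-d}), $s_3=1$, hence $\dim|\Theta|\ge 2$, which is impossible for a non-rational $\QQ$-Fano of index~$4$. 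No recursion or termination argument is needed.

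A smaller point: you assert $\alpha<1$ in the birational case without justification. The paper earns this by showing that $\alpha\ge 1$ together with~\eqref{equation-main} forces $\hat q>19$. You should supply that step before invoking Lemma~\ref{lemma:genus}.
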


\begin{proof}
By Proposition~\ref {proposition:torsions} the group $\Cl(X)$ is torsion free.
Assume that $X$ is not rational. 
According to \cite[Theorem~1.2, Proposition~2.1]{Prokhorov-e-QFano7} we have 
\begin{equation}\label{q=7:basket}
\B(X)=(2, 2, 3, r),
\end{equation}
where for $r$ there are only two possibilities:
\begin{gather}
\label{eq:q=7:1:case1}
r=5,\quad A^3=1/15,\quad \g(X)=11;
\\
\label{eq:q=7:1:case2}
r=12,\quad A^3=1/12,\quad \g(X)=13.
\end{gather}
In particular, $X$ has only cyclic quotient singularities. By the orbifold Riemann-Roch in both cases we have 
\begin{equation*}
\dim |kA|=k-1 \quad \text{for $k=1$, $2$, $3$.} 
\end{equation*}
Hence the linear system $|A|$ contains a unique irreducible surface $M_1$ and $|kA|$ has no fixed components for $k=2$ and $3$.

\subsection{}
Apply the construction~\eqref{eq:sl} with $\MMM=|3A|$. 
In a neighborhood of the point of index $r$ ($r=5$ or $12$) we have $\MMM\sim -tK_X$, where 
\begin{equation}
\label{eq:q=7:1:n}
t=\begin{cases}
4&\text{if $r=5$, } 
\\
9 & \text{if $r=12$}.
\end{cases} 
\end{equation}
Then by Lemma~\ref{lemma-cthreshold}
\begin{equation}
\label{eq:q=7:1:beta}
\beta_3\ge t\alpha.
\end{equation}
The relation~\eqref{equation-main} for $k=3$ has the form
\begin{equation}\label{eq:q=7:1:hatq}
3\hat q= 7 s_3+(7 \beta_3-3\alpha)e\ge 7 s_3+ (7t-3) \alpha e,
\end{equation}
where $\hat q\le 13$ by Proposition~\ref{prop:rat}.
If the contraction $\bar f$ is not birational, then $\hat q=1$ by Lemma~\ref{lemma:not-birational:q}. Hence, $\alpha\le 3/(7t-3)$.
On the other hand, 
\[
\alpha\ge 1/r> 3/(7t-3). 
\]
The contradiction shows that the contraction $\bar f$ must be birational. In particular, the movable linear system $\MMM$ is not contracted, i.e. 
\[
s_3\ge 1.
\]
\subsection{}
If $\alpha\ge 1$, then the inequality~\eqref{eq:q=7:1:hatq} and Proposition~\ref{prop:rat} give us successively
\begin{equation*}
3\hat q\ge 7 s_3+25 e,\qquad \hat q\ge 11,\qquad s_3\ge 5,\qquad \hat q>19, 
\end{equation*}
a contradiction. Taking~\eqref{q=7:basket} into account we see that 
$P:=f(E)$ is a non-Gorenstein point of $X$ and $f$ is the weighted blowup as in Proposition~\ref{prop:discrepancies}\ref{theorem-Kawamata-blowup}
(so-called \textit{Kawamata blowup}). In particular, $\alpha=1/\ind(X,P)$. In this case by Lemma~\ref{lemma:genus} we have
\begin{equation*}
\g(\hat X)\ge \g(X) \ge 11.
\end{equation*}
Since $\hat X$ is not rational, according to 
Proposition~\ref{prop:rat} we have 
\[
\hat q\le 7. 
\]
Note that $(7t-3) \alpha e\ge 5$. Then~\eqref{eq:q=7:1:hatq} implies 
\[
s_3\le 2.
\]

\subsection{Case: $\ind(X,P)=2$}
Then $\alpha=1/2$ and $\beta_3=1/2+m_3$, where $m_3\ge 2$ by~\eqref{eq:q=7:1:beta}.
We can rewrite~\eqref{eq:q=7:1:hatq} in the following form
\begin{equation*}
3\hat q= 7 s_3+(7 \beta_3-3\alpha)e= 2e + 7 (s_3+m_3e).
\end{equation*}
Since $\hat q\le 7$, this equation has no solutions.

\subsection{Case: $\ind(X,P)=3$}
Then, as above, $\alpha=1/3$, $\beta_3$ is an integer $\ge 2$, and~\eqref{eq:q=7:1:hatq} has the form
\begin{equation*}
3\hat q= 7 s_3+(7 \beta_3-3\alpha)e= -e + 7 (s_3+\beta_3e).
\end{equation*}
Again, there are no solutions.

\subsection{Case: $\ind(X,P)=r$, $r=5$ or $12$}
Then $\beta_1=t'/r+m_1$, where $m_1\ge 0$, and $t'=3$ if $r=5$ and $t'=7$ if $r=12$. 
The relation~\eqref{equation-main} for $k=1$ has the form
\begin{equation*}
7\ge \hat q= 7 s_1+(7 \beta_1-\alpha)e=4e + 7 (s_1+m_1e).
\end{equation*}
From this we obtain $s_1=0$ and $\hat q=4$. Then from~\eqref{eq:q=7:1:hatq} we obtain $s_3=1$.
Since $s_1=0$, the group $\Cl(\hat X)$ is torsion free by Lemma~\ref{lemma-torsion-d}.
Thus $\hat \MMM\sim 0$ and so $\dim |\Theta|\ge 2$. This contradicts Proposition~\ref{prop:rat}.
\end{proof}

\begin{corollary}
\label {corollary:ind=7}
Let $X$ be a $\QQ$-Fano threefold with $\qQ(X)=7$ and 
let $A$ be a Weil divisor such that $-K_X\qq 7A$ \textup(here we do not claim that $-K_X\sim 7A$\textup).
Assume that $\dim |2A|\ge 1$.
Then $X$ is rational. 
\end{corollary}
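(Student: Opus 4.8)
The plan is to derive the corollary from Proposition~\ref{proposition:ind=7}, using the hypothesis $\dim|2A|\ge 1$ first to eliminate torsion in $\Cl(X)$ and then to confine the remaining cases to the large-genus range already settled there.

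First I would fix the torsion. Suppose $\Cl(X)_\tors\neq 0$. Since $q:=\qQ(X)=7\ge 5$, Proposition~\ref{proposition:torsions} applies and leaves only the possibilities \ref{tor:8} and \ref{tor:9}; in both $\qW(X)=7$ and $\Cl(X)_\tors=\langle\Xi\rangle\simeq\ZZ/2\ZZ$, so there is a Weil divisor $A_0$ with $-K_X\sim 7A_0$. From $-K_X\qq 7A$ we get $A\sim A_0+s\Xi$ for some $s\in\ZZ$, and since $2\Xi\sim 0$ this gives $|2A|=|2A_0+2s\Xi|=|2A_0|$. But Corollary~\ref{cor:7tor} records $\dim|2A_0|=0$ in both \ref{tor:8} and \ref{tor:9}, contradicting $\dim|2A|\ge 1$. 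Hence $\Cl(X)_\tors=0$; in particular $\qW(X)=\qQ(X)=7$, and $A$ is the ample generator of $\Cl(X)\simeq\ZZ$ with $-K_X\sim 7A$. This is exactly the point of the parenthetical remark in the statement: we do not assume $-K_X\sim 7A$ at the outset, but the hypothesis on $|2A|$ forces it.

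Next, arguing by contradiction, assume $X$ is not rational. The row $q=7$ of Proposition~\ref{prop:rat} then gives $\g(X)\le 14$ and $\dim|2A|\le 1$, so that $\dim|2A|=1$ in view of the hypothesis. If $\g(X)\ge 11$, Proposition~\ref{proposition:ind=7} already yields that $X$ is rational, a contradiction; thus it remains to dispose of the range $\g(X)\le 10$.

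For $\g(X)\le 10$ I would run the computer search used in the proofs of Propositions~\ref{proposition:qQqW} and~\ref{proposition:torsions} (Steps~1--6), now imposing the numerical constraints $\dim|2A|=1$ and the bounds on $\dim|kA|$ from the $q=7$ row of Proposition~\ref{prop:rat}, to obtain the finite list of admissible baskets $\B(X)$ and values of $A^3$. Each survivor I would eliminate by the main construction~\eqref{eq:sl} applied to the movable system $\MMM=|2A|$, following Proposition~\ref{proposition:ind=7}: Lemma~\ref{lemma-cthreshold} bounds $\beta_2$ below at each non-Gorenstein point, and the identity~\eqref{equation-main} with $k=2$ becomes $2\hat q=7s_2+(7\beta_2-2\alpha)e$. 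From this one excludes a non-birational $\bar f$ (which would force $\hat q=1$), shows that $\alpha<1$ so that $f$ is a Kawamata blowup and $\g(\hat X)\ge\g(X)$ by Lemma~\ref{lemma:genus}, and finally bounds $\hat q$ via Proposition~\ref{prop:rat} to reach a contradiction. The main obstacle is this last step: whereas in Proposition~\ref{proposition:ind=7} the basket is forced to be $(2,2,3,r)$, for small genus several baskets can survive the search, and for each the local index of $|2A|$ at the non-Gorenstein points---and hence the analysis of~\eqref{equation-main}---has to be carried out separately.
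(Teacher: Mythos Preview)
Your treatment of the torsion part is correct and matches the paper's one-line appeal to Corollary~\ref{cor:7tor}: you simply spell out why $\dim|2A|\ge 1$ is incompatible with either of the two torsion cases \ref{tor:8}, \ref{tor:9}.

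Where your proposal diverges is in the handling of $\g(X)\le 10$. The paper's proof is much shorter here: once $\Cl(X)$ is torsion free, a computer search (the same Steps~1--6 you cite) shows that \emph{no} numerical candidate with $\qQ(X)=7$ and $\dim|2A|\ge 1$ has $\g(X)\le 10$ at all. In other words, the constraint $\dim|2A|\ge 1$ already forces $\g(X)\ge 11$ at the level of orbifold Riemann--Roch, and Proposition~\ref{proposition:ind=7} finishes immediately. Your plan to list survivors with $\g(X)\le 10$ and then eliminate each one by a Sarkisov link with $\MMM=|2A|$ is not wrong, but it is attacking an empty set: the ``main obstacle'' you anticipate never materialises, because the search returns nothing. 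So your proof would work, but the entire second half is unnecessary once you actually run the search.
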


\begin{proof}
By Corollary ~\ref{cor:7tor} the group $\Cl(X)$ is torsion free.
Then a computer search gives us $\g(X)\ge 11$.
\end{proof}

\section{$\QQ$-Fano threefolds of Fano index $13$}
\label{sect:13}
\begin{proposition} \label{prop:13}
Let $X$ be a $\QQ$-Fano threefold with $\qQ(X)=13$. Then $X$ is rational.
\end{proposition}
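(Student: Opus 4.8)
The plan is to argue by contradiction, following closely the template of the proof of Proposition~\ref{proposition:ind=7}. Suppose $X$ is not rational and set $q:=\qQ(X)=13$. Since $q\ge 5$, Proposition~\ref{proposition:torsions} applies, and as no row of its table has $q=13$ we conclude that $\Cl(X)$ is torsion free; hence $\qW(X)=\qQ(X)=13$ and $A$ is the ample generator of $\Cl(X)\simeq\ZZ$ with $-K_X\sim 13A$. In particular, by Lemma~\ref{lemma:qQ=qW} every non-Gorenstein point of $X$ has index coprime to $13$, and we are in the situation of Section~\ref{sect:constr}. Non-rationality together with Proposition~\ref{prop:rat} forces $\g(X)=4$ and $\dim|A|=\dim|2A|=-1$, $\dim|3A|=\dim|4A|=\dim|5A|=0$; moreover the basket $\B(X)$ and the degree $A^3$ are pinned down up to the short list produced by the computer search used in Proposition~\ref{prop:rat}. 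Since $\dim|{-}K_X|=\g(X)+1=5$, the anticanonical system is movable.

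I would then run the main construction~\eqref{eq:sl}. Build the link from a movable system $\MMM=|kA|$ (the smallest $k$ with $\dim|kA|\ge 1$, or simply $|{-}K_X|=|13A|$) with $-(K_X+c\MMM)$ ample, obtaining the divisorial Mori extraction $f\colon\tilde X\to X$ of a point $P$, the log flips $\chi$, and the Mori contraction $\bar f\colon\bar X\to\hat X$. Crucially, the master equation~\eqref{equation-main}, namely $j\hat q=q s_j+(q\beta_j-j\alpha)e$ with $q=13$, holds for every $j$ with $|jA|\neq\emptyset$, in particular for the rigid values $j\in\{3,4,5\}$; writing $|jA|\sim -t_jK_X$ near $P$, Lemma~\ref{lemma-cthreshold} gives $\beta_j\ge t_j\alpha$, and Proposition~\ref{prop:discrepancies} determines $\alpha$ from the type of $P$.

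Next comes the numerical elimination. If $\bar f$ is not birational, Lemma~\ref{lemma:not-birational:q} forces $\hat q=1$, whereupon~\eqref{equation-main} bounds $\alpha\le j/((13t_j-j)e)$, contradicting $\alpha\ge 1/\ind(X,P)$ for a suitable $j$, exactly as in Proposition~\ref{proposition:ind=7}. Hence $\bar f$ is birational; then $X$ and $\hat X$ are birational, so $\hat X$ is again non-rational and $\hat q\le 13$ by Proposition~\ref{prop:rat}. If $\alpha\ge 1$, the inequalities from~\eqref{equation-main} and Proposition~\ref{prop:rat} raise $\hat q$ and the $s_j$ until $\hat q>13$, a contradiction; therefore $\alpha<1$, $P$ is non-Gorenstein, $f$ is a Kawamata blow-up with $\alpha=1/\ind(X,P)$, and Lemma~\ref{lemma:genus} yields $\g(\hat X)\ge\g(X)=4$. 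A case-by-case analysis over the indices $\ind(X,P)$ occurring in $\B(X)$, feeding~\eqref{equation-main} for several $j$ into the bounds of Proposition~\ref{prop:rat} on $\hat q$ and on the $\dim|\Theta|$, should then kill every case; the endgame is $\ind(X,P)=r$ (the largest index), where the vanishing of the relevant $s_j$ forces $\Cl(\hat X)$ to be torsion free by Lemma~\ref{lemma-torsion-d} and hence $\dim|\Theta|$ too large.

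I expect the main obstacle to be the bookkeeping of $\alpha$ and of the multiplicities $\beta_j$ at each point of the basket, and in particular the residual situation where the link lands on a Fano $\hat X$ with small $\hat q$ but with torsion in $\Cl(\hat X)$; this is where Lemma~\ref{lemma-torsion-d} and Corollary~\ref{corollary:gcd:qn} have to be combined with care to reach a contradiction with Proposition~\ref{prop:rat}.
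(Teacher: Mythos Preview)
Your overall strategy is correct and matches the paper's: reduce to the single numerical type $\B(X)=(2,3,3,5,7)$, $A^3=1/210$, run the Sarkisov link, rule out the non-birational and $\alpha\ge 1$ cases, and then go index by index. But your expected endgame is not what actually happens, and this is a genuine gap in the plan.

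First, the choice of $\MMM$ matters. The paper takes $\MMM=|8A|$ (not the smallest movable system, and not $|-K_X|$): near the index-$7$ point one has $8A\sim -6K_X$, which gives the strong bound $\beta_8\ge 6\alpha$; your suggested $|-K_X|$ or $|6A|$ would only give $t=1$. The paper also exploits the decompositions $8=3+5=4+4$ to show $s_8\ge 2$, since at most one of $\bar M_3,\bar M_4,\bar M_5$ can be the $\bar f$-exceptional divisor. Second, and more importantly, the two serious cases do \emph{not} end the way you predict. When $\ind(X,P)=7$ the link lands on $\hat q=7$ with $s_8=2$ and $e\ge 3$; there is no vanishing $s_j=0$ to kill torsion, and $\dim|\Theta|$ is not ``too large''. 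The contradiction comes from invoking Corollary~\ref{corollary:ind=7} (hence Proposition~\ref{proposition:ind=7}) as an \emph{input}: a $\QQ$-Fano with $\qQ=7$ and $\dim|2A|\ge 1$ is rational. So the index-$7$ result is not merely a template for you to imitate---it is a lemma you must call. When $\ind(X,P)=5$ the link lands on $\hat q=5$ with $e=1$ and $\Cl(\hat X)_\tors\simeq\ZZ/3\ZZ$; here Lemma~\ref{lemma-torsion-d} and Corollary~\ref{corollary:gcd:qn} alone are not enough. One needs the discrepancy bound of Corollary~\ref{corollary:tor:discr}: the relation~\eqref{eq:q=q:rh1} forces $b\ge 7/3$ at the image point, contradicting that corollary.

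In short: keep your outline, but choose $\MMM=|8A|$, use the $3+5$ and $4+4$ splittings to get $s_8\ge 2$, and be prepared for the two hard cases to terminate via Corollary~\ref{corollary:ind=7} (for $\ind=7$) and Corollary~\ref{corollary:tor:discr} (for $\ind=5$), rather than via a torsion-free/large $\dim|\Theta|$ argument.
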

\begin{proof}
By Proposition~\ref {proposition:torsions} the group $\Cl(X)$ is torsion free.
Assume that $X$ is not rational.
According to \cite{Prokhorov2008a} we have to consider only one case:
\begin{equation}\label{eq:q13:AB}
A^3=\textstyle \frac1 {210},\qquad \B=(2, 3, 3, 5, 7). 
\end{equation}
One can expect that \emph{all} the varieties of this type are hypersurfaces $X_{12}\subset \PP(3, 4, 5, 6, 7)$ (cf. \cite{Brown-Suzuki-2007j}), but this is not known.

By the orbifold Riemann-Roch,~\eqref{eq:q13:AB} implies that $|A|=|2A|=\emptyset$, 
the linear system $|kA|$ for $k=3$, $4$, $5$ contains a unique irreducible surface $M_k$ and for $k=6$, $7$, $8$ the linear system $|kA|$ is a pencil $\MMM_k$ without fixed components \cite[Proposition~3.6]{Prokhorov2008a}. 

\subsection{}
Apply the construction~\eqref{eq:sl} with $\MMM=|8A|$. Then near the point of index 7 we have $\MMM\sim -6K_X$. By Lemma~\ref{lemma-cthreshold}
\begin{equation}\label{eq:q=13:beta8}
\beta_8\ge 6\alpha.
\end{equation} 
The relation~\eqref{equation-main} for $k=8$ has the form
\begin{equation}\label{equation-13-q8}
8\hat q= 13 s_8+(13 \beta_8-8\alpha)e\ge 13 s_8+70e\alpha,
\end{equation}
where $\hat q\le 13$ by Proposition~\ref{prop:rat}.
Since $\alpha\ge 1/7$, we see that $\hat q>1$. By Lemma~\ref{lemma:not-birational:q} this implies that the contraction $\bar f$ is birational and so $s_8>0$. 
We also have 
\begin{equation*}
\tilde \MMM \qq \tilde M_3+\tilde M_5 + (\beta_3+\beta_5-\beta_8)E\qq 2\tilde M_4 +(2\beta_4-\beta_8)E,
\end{equation*}
where $\beta_3+\beta_5\ge \beta_8$ and $2\beta_4\ge \beta_8$.
Pushing forward this relation to $\hat X$ we obtain
\begin{equation*}
s_8=s_3+s_5 + (\beta_3+\beta_5-\beta_8)e=2s_4 +(2\beta_4-\beta_8)e.
\end{equation*}
Since the $\bar f$-exceptional divisor is irreducible, only one of the numbers $s_3$, $s_4$, $s_5$ can be equal to $0$. 
Therefore, 
\[
s_8\ge 2.
\]

\subsection{}
If $\alpha\ge 2/3$, then the relation~\eqref{equation-13-q8} gives us $\hat q\ge 10$. 
Then $\Cl(\hat X)$ is torsion free by Proposition~\ref{proposition:torsions}
and $\dim |k\Theta|\le 0$ for $k=1$, $2$, $3$ by Proposition~\ref{prop:rat}. Hence, $s_8\ge 4$. Then $\hat q\ge 13$ and so $s_8\ge 6$, $\hat q> 13$, a contradiction.
Therefore, $P:=f(E)$ is a non-Gorenstein point of $X$ and $f$ is the Kawamata blowup of $P$ by Proposition~\ref{prop:discrepancies}\ref{theorem-Kawamata-blowup}. In particular, $\alpha=1/\ind(X,P)$, where $\ind(X,P)=2$, $3$, $5$ or $7$.

\subsection{Case: $\ind(X,P)=2$} 
Then $\beta_8$ is an integer $\ge 3$ by~\eqref{eq:q=13:beta8}. The relation~\eqref{equation-13-q8} has the form
\begin{equation*}
8\hat q=-4e + 13(s_8+\beta_8e).
\end{equation*}
It has no solutions satisfying the inequalities $s_8\ge 2$, $\beta_8\ge 3$, $\hat q\le 13$. 

\subsection{Case: $\ind(X,P)=3$} 
Assume that $\ind(X,P)=3$. Then as above $\beta_8=2/3+m_8$, $m_8\ge 2$, and
\begin{equation*}
8\hat q=6e + 13(s_8+m_8e).
\end{equation*}
Again the equation has no suitable solutions.

\subsection{Case: $\ind(X,P)=5$} 
Then near the point of index $5$ we have $-K_X\sim \MMM_8$. Hence $\beta_8=1/5 +m_8$, where $m_8\ge 1$. The relation~\eqref{equation-13-q8} has the form
\begin{equation*}
8\hat q=e +13(s_8+m_8e).
\end{equation*}
We get only one solution: 
$\hat q=5$, $e=1$, $s_8=2$. Since $e=1$, we have $d=n$ by Lemma~\ref{lemma-torsion-d}.
Since $|A|=|2A|=\emptyset$, we have $d\ge 3$ and so $n=d=3$ by Proposition~\ref{proposition:torsions}. Thus $\Cl(\hat{X})_\tors\simeq\ZZ/3\ZZ$.
Then the image $\bar f(\bar F)$ is a non-Gorenstein point according to Corollary~\ref{corollary:gcd:qn}.
For $k=8$ the relation~\eqref{eq:q=q:rh1} yields $b\ge 7/3$. This contradicts Corollary~\ref{corollary:tor:discr}.

\subsection{Case: $\ind(X,P)=7$}
Finally we assume that $\ind(X,P)=7$.
Then $\beta_8=6/7 +m_8$, where $m_8\ge 0$. Hence,
\begin{equation}\label{eq:q=13:last}
8\hat q=10e +13(s_8+m_8e).
\end{equation}
If $e\le 2$, then the torsion part of $\Cl(\hat X)$ is non-trivial Lemma~\ref{lemma-torsion-d} because $|2A|=\emptyset$. By Proposition~\ref{proposition:torsions} we have $\hat q\le 7$ and then
\eqref{eq:q=13:last} has no solutions. Thus $e\ge 3$ and then there is only one possibility:
$\hat q=7$, $s_8=2$. Then $\hat X$ is rational by Corollary~\ref{corollary:ind=7}.
This concludes the proof of Proposition~\ref{prop:13}.
\end{proof}

\section{$\QQ$-Fano threefolds of Fano index $11$}
\label{sect:11}
\begin{proposition} \label{prop:11}
Let $X$ be a $\QQ$-Fano threefold with $\qQ(X)=9$. Then $X$ is rational.
\end{proposition}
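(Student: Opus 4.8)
The plan is to follow verbatim the strategy of Propositions~\ref{proposition:ind=7} and~\ref{prop:13}. First I note that $q=9$ does not occur in the table of Proposition~\ref{proposition:torsions}, so the group $\Cl(X)$ is torsion free; in particular $\qQ(X)=\qW(X)$ and $-K_X\sim 9A$, where $A$ generates $\Cl(X)\simeq\ZZ$. Arguing by contradiction, I assume that $X$ is not rational. Then Proposition~\ref{prop:rat}, resting on the classification of \cite{Prokhorov2008a}, \cite{Prokhorov-2013-fano}, forces $\g(X)=4$ and pins down $\B(X)$ together with $A^3$ to a short explicit list of cyclic-quotient baskets. Substituting these data into the orbifold Riemann--Roch formula reproduces the dimensions in the table: $|A|=\emptyset$, the systems $|2A|$ and $|3A|$ each consist of a single irreducible surface $M_2$, $M_3$, while $|4A|$ and $|5A|$ are pencils without fixed components. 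This supplies the movable system needed to launch the main construction.

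Next I would run the Sarkisov link~\eqref{eq:sl} of Section~\ref{sect:constr} with $\MMM=|5A|$. Near the non-Gorenstein point $P=f(E)$ one writes $\MMM\sim -tK_X$ with $t$ determined by the local index, so Lemma~\ref{lemma-cthreshold} gives $\beta_5\ge t\alpha$ and the fundamental relation~\eqref{equation-main} for $k=5$ reads
\begin{equation*}
5\hat q=9s_5+(9\beta_5-5\alpha)e\ge 9s_5+(9t-5)\alpha e,
\end{equation*}
with $\hat q\le 13$ by Proposition~\ref{prop:rat}. Since $\alpha\ge 1/r>0$, the right-hand side is too large for $\hat q=1$; hence by Lemma~\ref{lemma:not-birational:q} the contraction $\bar f$ is birational and $\MMM$ is not contracted, so $s_5\ge 1$. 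If needed, the reducible member $M_2+M_3\in|5A|$ together with the irreducibility of the $\bar f$-exceptional divisor sharpens this to $s_5\ge 2$, exactly as in the proof of Proposition~\ref{prop:13}. I then exclude the possibility that $f$ is not a Kawamata blowup: a larger discrepancy $\alpha$ drives $\hat q$, and then $s_5$, upward in the same cascading fashion as before, violating $\hat q\le 13$. Thus $f$ is the Kawamata blowup of $P$, $\alpha=1/\ind(X,P)$ by Proposition~\ref{prop:discrepancies}, and Lemma~\ref{lemma:genus} yields $\g(\hat X)\ge\g(X)=4$ with $\hat X$ non-rational, so Proposition~\ref{prop:rat} bounds $\hat q$ still further.

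The core of the argument is the case analysis over the indices $\ind(X,P)$ appearing in $\B(X)$. In each case~\eqref{equation-main} specializes to a linear Diophantine equation in $e$, $s_5$, $\hat q$ and the integral part $m_5$ of $\beta_5$, subject to the bounds already established. For most indices this equation has no admissible solution and the case closes at once. When a solution persists, I would discard it exactly as in Proposition~\ref{prop:13}: either the torsion of $\Cl(\hat X)$ forced through Lemma~\ref{lemma-torsion-d} and Corollary~\ref{corollary:gcd:qn} contradicts the discrepancy bound of Corollary~\ref{corollary:tor:discr}; or the numerics make $\dim|\Theta|$ too large for a non-rational $\hat X$, contradicting Proposition~\ref{prop:rat}; or the link terminates on a threefold to which an already settled case applies, for instance the index-$7$ criterion of Corollary~\ref{corollary:ind=7}.

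I expect the main obstacle to be precisely this last elimination step. A single surviving branch with $\hat q\in\{7,9\}$ cannot be dismissed by the raw inequality and must be matched against the torsion constraints or reduced to a previously treated index; keeping track of $\Cl(\hat X)_\tors$ and of which auxiliary system $|kA|$ to feed into~\eqref{equation-main} is the delicate part. Everything else --- the orbifold Riemann--Roch dimension count and the substitution of the Kawamata-blowup discrepancies into~\eqref{equation-main} --- is routine.
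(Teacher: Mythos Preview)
Your sketch follows the paper's methodology faithfully, but you have been misled by a typo in the statement: Proposition~\ref{prop:11} sits in the section on Fano index $11$, and the paper's proof of it is for $\qQ(X)=11$, not $9$. Every coefficient in the proof confirms this --- the main relation reads $5\hat q=11s_5+(11\beta_5-5\alpha)e$, the admissible baskets are $(2,5,7)$ and $(2,2,3,4,7)$ with $\g(X)\in\{9,7\}$, and the case split runs over $\ind(X,P)\in\{2,3,4,5,7\}$. Your numerical input ($\g(X)=4$, $|A|=\emptyset$, basket containing a point of index~$7$ together with points of index $2$ and $5$) is that of the index-$9$ case, which the paper handles separately as Proposition~\ref{prop:9}, \emph{after} the index-$11$ case is settled.

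This ordering matters and exposes a genuine gap in your outline. You bound $\hat q$ only by $13$ via Proposition~\ref{prop:rat}, but the paper's index-$11$ argument already invokes Proposition~\ref{prop:13} to get $\hat q\le 11$, and its index-$9$ argument invokes both Propositions~\ref{prop:13} and~\ref{prop:11} to get $\hat q\le 9$. With only $\hat q\le 13$ the Diophantine equations in your case analysis acquire spurious solutions (for instance, at $\ind(X,P)=2$ the relation $5\hat q=2e+9(s_5+m_5e)$ with $m_5\ge 3$, $s_5\ge1$ admits $\hat q=13$), and you have not said how you would dispose of them. Also, the sharpening ``$s_5\ge 2$ via $M_2+M_3\in|5A|$'' does not work as stated: that trick in Proposition~\ref{prop:13} used \emph{two} decompositions of the same system to force at least two of $s_3,s_4,s_5$ to be positive; a single decomposition $M_2+M_3$ only tells you that at most one of $s_2,s_3$ vanishes, and indeed the paper's index-$9$ proof never needs more than $s_5\ge 1$.

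In short: right template, wrong data. Rerun your plan with $q=11$, the two baskets above, and the sharper bound $\hat q\le 11$; the case list is longer (five local indices, two global baskets) and several branches are killed not by torsion or Corollary~\ref{corollary:ind=7} but by comparing $\dim|k\Theta|$ directly against Proposition~\ref{prop:rat} after pushing forward relations such as $6\bar M_1\qq\bar\MMM_6$ or $3\bar M_2\qq\bar\MMM_6$.
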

\begin{proof}
By Proposition~\ref {proposition:torsions} the group $\Cl(X)$ is torsion free.
According to Proposition~\ref {prop:rat} and \cite{Prokhorov2008a} we have to consider only two cases:\par\smallskip\noindent
\begin{center}
\resizebox{\textwidth}{!}
{
\rm
\begin{tabularx}{\textwidth}{|l|X|c|c|c|c|c|c|c|c|}
\hline
&&&\multicolumn{6}{c|}{$\dim |kA|$}&
\\
\hhline{|~|~|~|------|~}
&$\B$&$A^3$& $|A|$&$|2A|$&$|3A|$&$|4A|$&$|5A|$&$|6A|$&$\g(X)$
\\[5pt]
\hline
\nr\label{case:q=11:non-empt} & $(2,5,7)$ & $1/70$ &
0
&0
&0
&1
&2
&3
&9
\\
\nr\label{case:q=11:empt} &
$(2,2,3,4,7)$ & $1/84$ &
$-1$&
$0$&
$0$&
$1$&
$1$&
$2$&
$7$
\\
\hline
\end{tabularx}
}
\end{center}
\par\smallskip\noindent
There are examples of varieties of these types: they are hypersurfaces $X_{12}\subset \PP(1, 4, 5, 6, 7)$ and $X_{10}\subset\PP(2, 3, 4, 5, 7)$ in cases~\ref{case:q=11:non-empt} and~\ref{case:q=11:empt}, respectively
\cite{Brown-Suzuki-2007j}.

\begin{scase}
\label{scase-MMM=5}
From the table above one can see that in both cases the linear systems $|kA|$ have no fixed components for $k=4$, $5$, $6$.
Apply the construction~\eqref{eq:sl} with $\MMM=|5A|$. Then near the point of index 7 we have $A\sim -2K_X$, $\MMM \sim -3 K_X$. By Lemma~\ref{lemma-cthreshold}
\begin{equation*}
\label{eq:q11:beta5}
\beta_5\ge 3\alpha.
\end{equation*}
The relation~\eqref{equation-main} for $k=5$ has the form
\begin{equation*}
5\hat q= 11 s_5+(11 \beta_5-5\alpha)e =-5\alpha e+11 (s_5+ \beta_5e) \ge 11 s_5+28\alpha e.
\end{equation*}
Assume that $X$ is not rational. Then $\hat q\le 11$ by Propositions~\ref{prop:rat} and~\ref{prop:13}.
\end{scase}

\subsection{}
Assume that $\alpha\ge 1$. 
Then $\hat q\ge 6$ and $\alpha$ is an integer by Proposition~\ref{prop:discrepancies}. Moreover, $\alpha=e=1$ and 
$s_5+ \beta_5e$ is also an integer. Hence, $\hat q\equiv -1\mod 11$. This contradicts~\eqref{equation-index}.
Therefore, $\alpha<1$ by Proposition~\ref{prop:discrepancies}\ref{theorem-Kawamata-blowup}. In particular, 
\begin{equation}\label{eq:q=11:alpha}
\alpha=1/r,\qquad r:=\ind(X,P)=2,\ 3,\ 4,\ 5\ \text{or}\ 7.
\end{equation} 

\subsection{}
Assume that $\bar f$ is not birational.
Since $\bar X$ is not rational by our assumptions, $\hat q=1$ (see Lemma~\ref{lemma:not-birational:q}). 
Then $s_5=0$ and 
$5= (11 \beta_5-5\alpha)e$, where $11 \beta_5-5\alpha\in \ZZ$. Then $\beta_5=l/r$, $l\in \ZZ$ and $l\ge 3$ by~\eqref{eq:q11:beta5}. Thus we can write $5r= (11l-5)e$. But this equation has no solutions satisfying \eqref{eq:q=11:alpha}. Therefore, the contraction $\bar f$ is birational. In particular, $s_5>0$.

\subsection{Cases~\ref{case:q=11:non-empt} and~\ref{case:q=11:empt} with $\ind(X,P)=2$} 
Then $\beta_5=1/2+m_5$, $m_5\ge 1$.
Thus~\eqref{equation-main} for $k=5$ has the form
\begin{equation*}
5\hat q =3 e+11 (s_5+m_5 e).
\end{equation*}
We get one possibility: $\hat q=5$, $e=1$, $s_5=1$. 

In the case~\ref{case:q=11:non-empt} the linear system $|A|$ contains a unique member $M_1$. Then~\eqref{equation-main} for $k=1$ has a similar form 
\begin{equation*}
5=\hat q=5e+ 11( s_1+m_1e),\quad m_1\ge0. 
\end{equation*}
We obtain $s_1=0$. So, $\Cl(\hat X)$ is torsion free by Lemma~\ref{lemma-torsion-d}. Since $\dim |\Theta|=2$, the variety $\hat X$ is rational
by Proposition~\ref{prop:rat}.

In the case~\ref{case:q=11:empt} the map $\bar f \comp \chi \comp f^{-1}$ contracts a divisor $F\sim dA$ with $d>1$ (because $|A|=\emptyset$). Since $e=1$, by Lemma~\ref{lemma-torsion-d} we have 
$\Cl(\hat X)_\tors\simeq \ZZ/n\ZZ$ with $n=d>1$. 
Apply~\eqref{eq:q=q:rh1}-\eqref{eq:q=q:rh2}.
Recall that $n\le 3$ (see Proposition~\ref{proposition:torsions}).
In particular, $\gcd (n, 11)=1$. Then the image $\bar f(\bar F)$ is a non-Gorenstein point according to Corollary~\ref{corollary:gcd:qn}. 
For $k=5$ the relation~\eqref{eq:q=q:rh1} yields $14=n(b -5\gamma _5)$.
According to Corollary~\ref{corollary:tor:discr} this is impossible.

\subsection{Cases~\ref{case:q=11:non-empt} and~\ref{case:q=11:empt} with $\ind(X,P)=7$.}
\label{case:q11:ind7}
Then $\beta_5=3/7+m_5$, $\beta_6=5/7+m_6$, where
$m_5, m_6\ge 0$. The relation~\eqref{equation-main} for $k=5$ and~$6$ has the form
\begin{equation}\label{eq:q=11:n7:5-6}
\begin{array}{rcl}
5\hat q&=&4 e+ 11( s_5+m_5e), 
\\
6\hat q&=&7e+ 11( s_6+m_6e). 
\end{array}
\end{equation} 
Here $s_5\le 3$ because $\hat q\le 11$.
By Proposition~\ref{prop:rat} we have $\hat q\neq 9$ because $\g(\hat X)\ge \g(X)\ge 7$.
Then the system of equations~\eqref{eq:q=11:n7:5-6} one has $\hat q=3e$, $s_5=s_6=e=1$ or $2$. 

Assume that $\hat q=6$ (and $e=s_5=s_6=2$). 
In the case~\ref{case:q=11:non-empt} we have
\begin{equation*}
6\bar M_1 +(6\beta_1-\beta_6)\bar E\qq\bar\MMM_6\qq 2\Theta,\quad 6\beta_1\ge \beta_6.
\end{equation*}
Hence the divisor $\bar M_1$ is contracted (otherwise the class of $\Theta$ in the group $\Cl(\hat{X})/\Cl(\hat{X})_\tors$ would be divisible). 
Since $e=2$, this contradicts Lemma~\ref{lemma-torsion-d}.
In the case~\ref{case:q=11:empt} from the relation
\begin{equation*}
3\bar M_2 +(3\beta_2-\beta_6)\bar E\qq\bar\MMM_6\qq 2\Theta.
\end{equation*}
we see that the divisor $\bar M_2$ must be contracted. Since $e=2$, 
the group $\Cl(\hat X)$ is torsion free by Lemma~\ref{lemma-torsion-d}.
Since $s_6=2$ and $\dim \MMM_6=2$, we have $\dim |2\Theta|\ge 2$. 
This contradicts Proposition~\ref{prop:rat}.

Finally, assume that $\hat q=3$ (and $e=s_5=s_6=1$). 
In the case~\ref{case:q=11:non-empt} we have
\begin{equation*}
6\bar M_1 +(6\beta_1-\beta_6)\bar E\qq\bar\MMM_6\qq \Theta,\quad 6\beta_1\ge \beta_6.
\end{equation*}
As above, the divisor $\bar M_1$ must be contracted and the group $\Cl(\hat X)$ is torsion free.
Since $s_6=1$ and $\dim \MMM_6=3$, we have $\dim |\Theta|\ge 3$. This contradicts Proposition~\ref{prop:rat}.

In the case~\ref{case:q=11:empt} we have
\begin{equation*}
3\bar M_2 +(3\beta_2-\beta_6)\bar E\qq 2\bar M_3 +(2\beta_3-\beta_6)\bar E\qq\bar\MMM_6\qq \Theta,
\end{equation*}
where $3\beta_2\ge \beta_6$, $2\beta_3\ge\beta_6$.
Since both $\bar M_2$ and $\bar M_3$ cannot be contracted simultaneously, this gives a contradiction.

\subsection{Case~\ref{case:q=11:empt} with $\ind(X,P)=3$}
Then $\beta_5=1/3+m_5$, $m_5\ge 1$.
Thus 
\begin{equation*}
5\hat q =2 e+11 (s_5+m_5 e) 
\end{equation*}
and we obtain $\hat q=7$ and $s_5\le 2$. 
Then $\hat X$ is rational by Corollary~\ref{corollary:ind=7}.

\subsection{Case~\ref{case:q=11:empt} with $\ind(X,P)=4$}
Then $\beta_5=3/4+m_5$, $m_5\ge 0$. If $m_5=0$, then $\mathrm{ct}(X,\MMM)=1/3$.
In this case $(X,\frac13 \MMM)$ is canonical and points of indices $4$ and $7$ are canonical centers. Then we can apply our construction~\eqref{eq:sl} starting with the point of index $7$, as in~\ref{case:q11:ind7}.
This gives a rationality construction.

Thus we assume that $m_5\ge 1$. The relation~\eqref{equation-main} for has the form
\begin{equation*}
5 \hat q=11(s_5 + m_5 e)+7e
\end{equation*}
and then $\hat q=8$, $s_5\le 2$.
By Proposition~\ref{prop:rat} the variety $\hat X$ is rational.

\subsection{Case~\ref{case:q=11:non-empt} with $\ind(X,P)=5$}
Then $\MMM$ is a Cartier at $P$ and so $\beta_5$ must be a positive integer. 
The relations~\eqref{equation-main} has the form 
\begin{eqnarray}
5 \hat q=11(s_5 + \beta_5 e)-e.
\end{eqnarray}
Since $\hat q\le 11$, this equation has no solutions. This concludes the proof of Proposition~\ref{prop:11}.
\end{proof}

\section{$\QQ$-Fano threefolds of Fano index $9$}
\label{sect:9}
\begin{proposition} \label{prop:9}
Let $X$ be a $\QQ$-Fano threefold with $\qQ(X)=9$. Then $X$ is rational.
\end{proposition}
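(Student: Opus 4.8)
The plan is to follow verbatim the template established in the proofs of Propositions~\ref{prop:13} and~\ref{prop:11}. First I would invoke Proposition~\ref{proposition:torsions} to conclude that $\Cl(X)$ is torsion free, so that the main construction of Section~\ref{sect:constr} applies with $\qQ(X)=\qW(X)=9$. Assuming $X$ is non-rational, I would then extract from Proposition~\ref{prop:rat} (the row $\qQ=9$) together with the classification in \cite{Prokhorov2008a} the finite list of admissible numerical data: the basket $\B(X)$, the self-intersection $A^3$, the genus $\g(X)\le 4$, and the dimensions $\dim|kA|$ for small $k$. The row $\qQ=9$, $\g(X)=4$ of the table in Proposition~\ref{prop:rat} already pins these down quite rigidly ($\dim|A|=-1$, $\dim|2A|=\dim|3A|=0$, $\dim|4A|=\dim|5A|=1$), and orbifold Riemann--Roch would identify the basket(s) explicitly, showing that $X$ carries only cyclic quotient singularities and that $|4A|$, $|5A|$ are pencils without fixed components.

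Next I would run the main construction~\eqref{eq:sl} with a carefully chosen movable system, most likely $\MMM=|5A|$ or $|4A|$, and record the key relation~\eqref{equation-main}, namely $k\hat q = 9s_k + (9\beta_k - k\alpha)e$, together with the threshold bound $\beta_k\ge t\alpha$ from Lemma~\ref{lemma-cthreshold}, where $t$ is the local index of $\MMM$ at the point $P$ of largest index. The first dichotomy is whether $\bar f$ is birational: using $\alpha\ge 1/r$ and the inequality version of~\eqref{equation-main}, I expect to force $\hat q>1$, hence by Lemma~\ref{lemma:not-birational:q} the contraction $\bar f$ must be birational and $s_k>0$. Then the bound $\hat q\le 11$ comes from Propositions~\ref{prop:rat},~\ref{prop:13} and~\ref{prop:11}, since by Lemma~\ref{lemma:genus} we have $\g(\hat X)\ge\g(X)$ whenever $\alpha<1$, and indices $13$ and $11$ are already settled. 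The case $\alpha\ge 1$ should be eliminated by a quick congruence argument on~\eqref{equation-main} exactly as in Proposition~\ref{prop:11}, leaving $f$ a Kawamata blowup with $\alpha=1/r$, $r=\ind(X,P)\in\{2,3,5,7\}$ (the non-Gorenstein indices in the basket).

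The heart of the proof is then the case-by-case analysis over $r$. For each value of $r$ I would substitute $\alpha=1/r$ and $\beta_k = t'/r + m_k$ (with $m_k$ a nonnegative integer and $t'$ the local index class) into~\eqref{equation-main}, reducing it to a linear Diophantine equation in $\hat q$, $e$, $s_k$, $m_k$ subject to $\hat q\le 11$ and the positivity constraints. Most branches should die outright for lack of integer solutions; the surviving branches I expect to terminate either by landing on $\hat q=7$ with $\dim|2\Theta|\ge 1$, whence $\hat X$ is rational by Corollary~\ref{corollary:ind=7}, or on a small $\hat q$ where Proposition~\ref{prop:rat} forces $\dim|k\Theta|$ too large, contradicting non-rationality of $\hat X$. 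Where a nontrivial torsion appears on $\hat X$, I would combine Lemma~\ref{lemma-torsion-d} with Corollary~\ref{corollary:gcd:qn} and the discrepancy bound of Corollary~\ref{corollary:tor:discr} to derive a contradiction from~\eqref{eq:q=q:rh1}--\eqref{eq:q=q:rh2}, precisely as in the index-$5$ subcase of Proposition~\ref{prop:13}.

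The main obstacle I anticipate is not any single step but the bookkeeping in the $r=7$ branch, where $\alpha=1/7$ makes $70e\alpha$-type terms comparatively small and thus permits more candidate solutions of~\eqref{equation-main}; there I would likely need the auxiliary relations obtained by expressing $\bar\MMM_5$ through $\tilde M_2 + \tilde M_3$ (using $\dim|2A|=\dim|3A|=0$) to control which of the divisors $\bar M_k$ can be $\bar f$-contracted, thereby pinning down the torsion of $\Cl(\hat X)$ via Lemma~\ref{lemma-torsion-d}. If a residual branch with $\hat q=7$ survives, the cleanest finish is to verify $\dim|2\Theta|\ge 1$ and appeal to Corollary~\ref{corollary:ind=7}; if instead a branch with small $\hat q$ and trivial torsion survives, the contradiction comes from pushing forward the pencils $|4A|$, $|5A|$ to produce a linear system on $\hat X$ whose dimension violates the corresponding row of Proposition~\ref{prop:rat}.
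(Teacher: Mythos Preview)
Your proposal is correct and follows essentially the same approach as the paper's proof; the unique admissible basket turns out to be $\B(X)=(2,2,2,5,7)$, so the case split is over $r\in\{2,5,7\}$ (there is no index-$3$ point), and with indices $11$ and $13$ already settled the working bound is in fact $\hat q\le 9$. The $r=7$ branch produces only $(\hat q,e)=(5,1)$ and $(6,3)$, which are dispatched respectively by the torsion/discrepancy route you describe and by the $\dim|\Theta|\ge1$ contradiction with Proposition~\ref{prop:rat}, so the auxiliary $M_2+M_3$ relations you anticipate are not needed; the $\hat q=7$ solution actually arises in the $r=5$ branch, where torsion is forced and the discrepancy argument via Corollary~\ref{corollary:tor:discr} finishes it.
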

\begin{proof}
By Proposition~\ref {proposition:torsions} the group $\Cl(X)$ is torsion free.
Assume that $X$ is not rational. 
According to \cite[Proposition~3.6]{Prokhorov2008a} we have to consider only one case: 
\begin{equation}\label{eq:q9:AB}
\B=(2,2,2,5,7),\qquad A^3=1/70. 
\end{equation}
By the orbifold Riemann-Roch~\eqref{eq:q9:AB} implies that 
\begin{equation*}
|A|=\emptyset, \quad
\dim |2A|=\dim |3A|=0,\quad
\dim |4A|=\dim |5A|=1.
\end{equation*}
Thus the linear system $|kA|$ contains a unique irreducible surface $M_k$ for $k=2$ and $3$ and $|kA|$ for $k=4$ and $5$ is a pencil without fixed components.

\subsection{}
Apply the construction~\eqref{eq:sl} with $\MMM=|5A|$. Then near the point of index ${7}$ we have $\MMM\sim -6K_X$. By Lemma~\ref{lemma-cthreshold}
\begin{equation*}
\beta_5\ge 6\alpha. 
\end{equation*}
The relation~\eqref{equation-main} for $k=5$ has the form
\begin{equation*}
5\hat q= 9 s_5+(9 \beta_5-5\alpha)e\ge 7 s_5+49 \alpha e.
\end{equation*}

\subsection{}
By Propositions~\ref{prop:rat},~\ref{prop:13}, and~\ref{prop:11} we have $\hat q\le 9$. Then, obviously,
$\alpha<1$. Therefore, $P:=f(E)$ is a non-Gorenstein point of $X$ by Proposition~\ref{prop:discrepancies}\ref{theorem-Kawamata-blowup} and $\alpha=1/\ind(X,P)$, where $\ind(X,P)=2$, $5$ or $7$.

\subsection{}
If $\bar f$ is not birational, then $\hat q=1$ by Lemma~\ref{lemma:not-birational:q} and so $s_5=0$, i.e. $\bar \MMM$ is $\bar f$-vertical.
Note that $9 \beta_5-5\alpha$ is an integer (because $9\MMM_5+5K_X$ is Cartier). Hence, $9 \beta_5-5\alpha=1$ or $5$.
Let $r:=\ind(X,P)$. Then $\beta_5=l/r$ for some $l$ and $9 l=r+5$ or $5(r+1)$. For $r=2$, $5$, $7$ this equation has no solutions.
The contradiction shows that $\bar f$ is birational. In particular, $s_5>0$.

\subsection{Case: $\ind(X,P)=2$}
Then $\beta_5=1/2+m_5$, $m_5\ge 3$ and the relation~\eqref{equation-main} for $k=5$ has the form
\begin{equation*}
5\hat q= 2e+9 (s_5+ m_5e).
\end{equation*}
Since $\hat q\le 9$, this is impossible.

\subsection{Case: $\ind(X,P)=5$}
Then $\beta_5$ is an integer $\ge 2$ and, as above,
\begin{equation*}
5\hat q= -e+9 (s_5+ \beta_5e).
\end{equation*}
We get one possibility: $\hat q=7$, $e=1$, $s_5+ \beta_5=4$. 
Since $|A|=\emptyset$, the group $\Cl(\hat X)_\tors$ is non-trivial by by Lemma~\ref{lemma-torsion-d}.
By Proposition~\ref{proposition:torsions} we have $\Cl(\hat X)_\tors\simeq \ZZ/2\ZZ$.
By Corollary~\ref{corollary:gcd:qn} 
the image $\bar f(\bar F)$ is a point of even index. The relation~\eqref{eq:q=q:rh1} for $k=5$ has the form
\[
35 -9s_5 =2(b s_5-7\gamma _5),\qquad b\ge (35 -9s_5)/2s_5\ge 17/4.
\]
Then we obtain a contradiction by Corollary~\ref{corollary:tor:discr}.

\subsection{Case: $\ind(X,P)=7$}
Then $\beta_5=6/7+m_5$, $m_5\ge 0$,
\begin{equation*}
5\hat q= 9 s_5+(9 \beta_5-5\alpha)e=7e +9 (s_5+ m_5e).
\end{equation*}
We get the following possibilities:
\begin{equation*}
(\hat q, e)= (5,1)\quad \text{or}\quad (6,3). 
\end{equation*}

If $\hat q=6$, then the group $\Cl(\hat X)$ is torsion free by Proposition~\ref{proposition:torsions}. 
Since $s_5+ 3m_5=1$, we have $s_5=1$. Hence, $\dim |\Theta|\ge 1$.
This contradicts Proposition~\ref{prop:rat}.

Consider the case $\hat q=5$. Then $s_5\le 2$ Since $e=1$ and $|A|=\emptyset$, by Lemma~\ref{lemma-torsion-d} we have 
$\Cl(\hat X)_\tors\simeq \ZZ/n\ZZ$ with $n=d>1$. 
Apply~\eqref{eq:q=q:rh1} with $k=5$. We obtain $25-9s_5\le ns_5b$ and so $b\ge 7/2n$. 
Since $n\le 3$, we get a contradiction by Corollary~\ref{corollary:tor:discr}.
This concludes the proof of Proposition~\ref{prop:9}.
\end{proof}

\section{$\QQ$-Fano threefolds of Fano index $8$}
\label{sect:8}
\begin{proposition} \label{prop:8}
Let $X$ be a $\QQ$-Fano threefold with $\qQ(X)=8$. Then $X$ is rational.
\end{proposition}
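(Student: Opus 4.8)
The plan is to follow exactly the same strategy that succeeded for indices $13$, $11$, $9$ in the preceding sections, now applied to $\qQ(X)=8$. First I would invoke Proposition~\ref{proposition:torsions} to conclude that $\Cl(X)$ is torsion free, so that $\qQ(X)=\qW(X)=8$ and the main construction of Section~\ref{sect:constr} applies directly with $-K_X\sim 8A$. Assuming $X$ is not rational, I would next consult Proposition~\ref{prop:rat} together with the classification in \cite{Prokhorov2008a} to extract the finite list of admissible baskets $\B(X)$, values of $A^3$, and genera $\g(X)\le 10$; from the table in Proposition~\ref{prop:rat} the relevant dimensions $\dim|kA|$ for $k\le 5$ are bounded, and orbifold Riemann--Roch then pins down, in each case, which small linear systems $|kA|$ are empty, contain a unique surface, or are movable pencils without fixed components. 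This enumerates the handful of numerical cases to be dispatched.

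For each surviving case I would apply the Sarkisov link~\eqref{eq:sl} with a suitable movable system $\MMM=|kA|$ (presumably $k=5$, or the largest $k\le 6$ without fixed components, mirroring the index-$9$ argument). Near the non-Gorenstein point of largest index one reads off $\MMM\sim -tK_X$, so Lemma~\ref{lemma-cthreshold} gives $\beta_k\ge t\alpha$, and the master equation~\eqref{equation-main}, namely $k\hat q=8s_k+(8\beta_k-k\alpha)e$, becomes the main numerical constraint. As before, bounding $\alpha$ forces $\bar f$ to be birational (otherwise $\hat q=1$ by Lemma~\ref{lemma:not-birational:q} leads to a contradiction with $\alpha\ge 1/r$), and then $\alpha<1$ forces $f$ to be the Kawamata blowup of a non-Gorenstein point $P$ with $\alpha=1/\ind(X,P)$ by Proposition~\ref{prop:discrepancies}\ref{theorem-Kawamata-blowup}. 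One then splits into subcases according to $\ind(X,P)$, writing $\beta_k=t'/\ind(X,P)+m_k$ and solving~\eqref{equation-main} as a linear Diophantine equation in $\hat q$, $e$, $s_k$, $m_k$ subject to $\hat q\le 8$ (using Propositions~\ref{prop:13},~\ref{prop:11},~\ref{prop:9}) and $s_k\ge 1$.

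The key structural input, as in the earlier sections, is that since $8=\qW(X)$ is even while the singularity indices may share the factor $2$, one must track the torsion of $\Cl(\hat X)$ via Lemma~\ref{lemma-torsion-d}. When a solution survives the Diophantine analysis, I expect it to be eliminated in one of three ways: either $\hat X$ is forced to have $\hat q\ge 3$ with too-large $\dim|k\Theta|$, contradicting Proposition~\ref{prop:rat}; or $\Cl(\hat X)_\tors$ is nontrivial and the induced discrepancy $b$ on $\bar f(\bar F)$, computed from~\eqref{eq:q=q:rh1}, violates the bound in Corollary~\ref{corollary:tor:discr}; or $\hat q=7$ with $\dim|2\Theta|\ge 1$, whence $\hat X$ is rational by Corollary~\ref{corollary:ind=7}. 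The main obstacle will be that index $8$ admits a larger list of baskets and a genus bound $\g(X)\le 10$ that is higher than in the index-$9$, $11$, $13$ cases, so there are genuinely more numerical cases to enumerate, and the even value of $q$ makes the coprimality shortcut of Lemma~\ref{lemma:qQ=qW} less frequently available; consequently the torsion-and-discrepancy elimination via Corollaries~\ref{corollary:gcd:qn} and~\ref{corollary:tor:discr} will have to be invoked repeatedly and carried out carefully for each even-index singular point.
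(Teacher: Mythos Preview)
Your overall plan matches the paper's approach closely: torsion-freeness from Proposition~\ref{proposition:torsions}, a short basket list from Proposition~\ref{prop:rat} and computer search, the Sarkisov link~\eqref{eq:sl}, Kawamata blowups at the non-Gorenstein points, and elimination via Proposition~\ref{prop:rat}, Corollary~\ref{corollary:ind=7}, and the torsion/discrepancy machinery. Three points deserve correction.

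First, the paper chooses $\MMM=|4A|$, not $|5A|$. Since $4\mid 8$, the master equation~\eqref{equation-main} collapses to $\hat q=2s_4+(2\beta_4-\alpha)e$, which is much cleaner than the $5\hat q=8s_5+\cdots$ you would get with $k=5$; this choice drives all the subsequent Diophantine eliminations.

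Second, your expectation that ``bounding $\alpha$ forces $\bar f$ to be birational'' fails in the case $\B(X)=(5,7)$: with $\alpha=1/7$ and $\hat q=1$ the equation $1=(2\beta_4-\alpha)e$ has the solution $\beta_4=4/7$, $e=1$, so the numerical argument alone does not exclude a fibration. The paper closes this gap by a separate argument: if $\bar f$ were a conic bundle over a Du Val del Pezzo surface $\hat X$ of Picard rank one, then $\bar M_1$ and $\bar\MMM$ would be pulled back from $\hat X$, and one checks against the short list $\PP^2$, $\PP(1,1,2)$, $\PP(1,2,3)$, $\DP$ that $|2\Lambda|$ is always movable, contradicting $\dim|2A|=0$. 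You should anticipate this extra step.

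Third, your worry about the even value $q=8$ interacting badly with even singularity indices is unfounded here: the three surviving baskets are $(5,7)$, $(7,13)$, and $(3,5,11)$, all with odd local indices, so $\gcd(q,r)=1$ throughout and Lemma~\ref{lemma:qQ=qW} is never an obstacle. The torsion/discrepancy argument via Corollaries~\ref{corollary:gcd:qn} and~\ref{corollary:tor:discr} is in fact invoked only once, in the final subcase $\B(X)=(3,5,11)$, $\ind(X,P)=11$, $\hat q=5$.
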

\begin{proof}
By Proposition~\ref {proposition:torsions} the group $\Cl(X)$ is torsion free.
Assume that $X$ is not rational. 
Using a computer search and taking Proposition~\ref{prop:rat} into account we obtain the following possibilities:
\par\smallskip\noindent
\begin{center}
{
\rm
{
\begin{tabularx}{\textwidth}{|X|X|c|c|c|c|c|X|}
\hline
&&\multicolumn{5}{c|}{$\dim |kA|$}&
\\
\hhline{|~|~|-----|~}
$\B$&$A^3$& $|A|$&$|2A|$&$|3A|$&$|4A|$&$|5A|$&$\g(X)$
\\[5pt]
\hline
$(7, 13)$ &
$4/91
$ &$0
$ &$0
$ &$1
$ &$2
$ &$3
$ &$11$
\\
$(5, 7)$ &
$1/35
$ &$0
$ &$0
$ &$1
$ &$2
$ &$3
$ &$8$
\\
$(3, 5, 11)$ &
$4/165
$ &$-1
$ &$0
$ &$0
$ &$1
$ &$2
$ &$6$\\\hline
\end{tabularx}
}
} 
\end{center}
\par\smallskip\noindent
Note that existence of varieties with $\B(X)=(7, 13)$ and $(5,7)$ is not known.
Varieties with $\B(X)=(3,5,11)$ can be realized as hypersurfaces $X_{12}\subset \PP(1,3,4,5,7)$ which are rational.
But again we do not know if this is the only family with corresponding invariants. 

Apply the construction~\eqref{eq:sl} with $\MMM=|4A|$.
Since $X$ is not rational by our assumption, we have $\hat q\le 8$ 
(see Propositions~\ref{prop:rat},~\ref{prop:13},~\ref{prop:11}, and~\ref{prop:9}).

\subsection{Case $\B(X)=(5,7)$}
In a neighborhood of the point of index $7$ we have $\MMM\sim -4K_X$. 
Thus by Lemma~\ref{lemma-cthreshold}
\begin{equation*}
\beta_4\ge 4\alpha.
\end{equation*}
The relation~\eqref{equation-main} for $k=4$ has the form
\begin{equation}\label{eq:q=8:first}
8\ge \hat q=2s_4+ (2\beta_4 - \alpha)e\ge 2s_4+ 7 \alpha e.
\end{equation}

We claim that the contraction $\bar f$ is birational. Indeed, otherwise $\hat q=1$ by Lemma~\ref{lemma:not-birational:q} and so $s_4=0$, i.e. $\bar \MMM$ is the pull-back of some linear system on $\hat X$. Since $\dim \bar \MMM=2$, $\dim \hat X\neq 1$ (otherwise $\bar \MMM=\bar f^*|2p|$, where $p$ is a point on $\hat X\simeq \PP^1$, and then $\bar M_2=\bar f^*p$ must be movable). 
Further, $4\bar M_1\sim \bar \MMM$ and so $\bar M_1$ is also the pull-back of some divisor, say $\Lambda$, on the surface $\hat X$. 
Thus $\bar M_1=\bar f^* \Lambda$ and $\bar \MMM=\bar f^* |4\Lambda|$. Clearly, $\Lambda$ is a generator of the group $\Cl(\hat X)$. Recall that $\hat{X}$ is a del Pezzo surface with at worst Du Val singularities of type
$\mathrm {A_n}$ \cite[Theorem 1.2.7]{Mori-Prokhorov-2008}. According to the classification (see e.g. \cite[Lemmas 3 \& 7]{Miyanishi-Zhang-1988}) for $\hat X$ there are only four possibilities:
\begin{equation*}
\PP^2,\qquad \PP (1,1,2),\qquad \PP (1,2,3)\quad \text{or}\quad \DP,
\end{equation*}
where $\DP$ is a del Pezzo surface of degree $5$ whose singular locus consists of one point of type $\mathrm {A_4}$.
Since $\dim |\bar M_1|=\dim |\bar M_2|=0$, the divisors $\Lambda$ and $2\Lambda$ are not movable. But one can easily check that $\dim |2\Lambda|>0$ in all cases.
The contradiction shows that the contraction $\bar f$ is birational. In particular, 
\[
s_4\ge 1.
\]
Then from \eqref{eq:q=8:first} we immediately see that $\alpha<1$. 
Therefore, $P:=f(E)$ is a non-Gorenstein point of $X$ and $\alpha=1/\ind(X,P)$, where $\ind(X,P)=5$ or $7$ (see Proposition~\ref{prop:discrepancies}\ref{theorem-Kawamata-blowup}).

\begin{scase}{\bf Subcase $\ind(X,P)=7$.}
Then we can write $\beta_1=1/7+m_1$ and $\beta_4=4/7+m_4$, where $m_1$ and $m_4$ are non-negative integers. We can rewrite the relation~\eqref{equation-main} for $k=1$ and $4$ as follows
\begin{equation*}
8\ge \hat q=8(s_1 + m_1 e)+e=2(s_4 + m_4 e)+e.
\end{equation*}
This yields $\hat q=e$ and $s_4=0$, a contradiction.
\end{scase}

\begin{scase}{\bf Subcase $\ind(X,P)=5$.}
As above, $\beta_4=3/5+m_4$, $\beta_1=2/5+m_1$, where $m_1\ge 0$ and $m_4\ge 1$. Therefore,
\begin{equation*}
8\ge \hat q=2(s_4 + m_4 e)+e=8(s_1 + m_1 e)+3e\ge 3.
\end{equation*}
This yields $s_1=0$ and $\hat q=3e$. Since $s_1=0$, by Lemma~\ref{lemma-torsion-d} we have $e=1$, $\hat q=3$, and $s_4=0$, a contradiction.
\end{scase}

\subsection{Case $\B(X)=(7, 13)$}
Near the point of index $13$ we have $\MMM\sim -7K_X$. 
Thus by Lemma~\ref{lemma-cthreshold}
\begin{equation}
\label{eq:q=8:beta:bc}
\beta_4\ge 7\alpha,\qquad \beta_1\ge\textstyle \frac 74\alpha.
\end{equation}
The relation~\eqref{equation-main} for $k=4$ has the form
\begin{equation}\label{eq:q=8:hatq:bc}
8\ge \hat q= 2 s_4+(2 \beta_4-\alpha)e\ge 2 s_4+13e\alpha.
\end{equation}
From this, one immediately sees that $\alpha<1$. 
Therefore, $P:=f(E)$ is a non-Gorenstein point of $X$ and $f$ is the Kawamata blowup of $P$ by Proposition~\ref{prop:discrepancies}\ref{theorem-Kawamata-blowup}. In particular, $\alpha=1/\ind(X,P)$, where $\ind(X,P)=7$ or $13$.

\begin{scase}{\bf Subcase $\ind(X,P)=13$.}
Then we can write $\beta_1=5/13+m_1$, where $m_1$ is a non-negative integer. Therefore,
\begin{equation*}
8\ge \hat q=8(s_1 + m_1 e)+3e\ge 3.
\end{equation*}
This gives us $s_1=0$ and $\hat q=3e$. Since $s_1=0$, Lemma~\ref{lemma-torsion-d} we have $e=1$, $\hat q=3$, and $\Cl(\hat X)$ is torsion free. 
Similarly, we can compute 
\begin{equation*}
\beta_5=12/13+m_5,\qquad 
5 \hat q=8(s_5 + m_5 e)+7e,\qquad s_5=1. 
\end{equation*}
Therefore, $\dim |\Theta|\ge \dim \MMM_5=3$.
This contradicts Proposition~\ref{prop:rat}.
\end{scase}

\begin{scase}{\bf Subcase $\ind(X,P)=7$.}
Then we can write $\beta_1=1/7+m_1$, where $m_1$ is a positive integer. Then
\begin{equation*}
8\ge \hat q=8(s_1 + m_1 e)+e\ge 9,
\end{equation*}
which is a contradiction.
\end{scase}

\subsection{Case $\B(X)=(3, 5, 11)$} 
Near the point of index $11$ we have $\MMM\sim -6K_X$. 
Thus by Lemma~\ref{lemma-cthreshold}
\begin{equation}
\label{eq:q=8:beta:c-3511}
\beta_4\ge 6\alpha.
\end{equation}
The relation~\eqref{equation-main} for $k=4$ has the form
\begin{equation}\label{eq:q=8:hatq:c}
8\ge \hat q= 2 s_4+(2 \beta_4-\alpha)e\ge 2 s_4+11e\alpha.
\end{equation}
From this we immediately see that $\alpha<1$.
Therefore, $\alpha=1/\ind(X,P)$, where $\ind(X,P)=3$, $5$ or $11$ (see Proposition~\ref{prop:discrepancies}\ref{theorem-Kawamata-blowup}).

\begin{scase}{\bf Subcase $\ind(X,P)=3$.}
Then we can write $\beta_4=2/3+m_4$, where $m_4\ge 2$. Therefore,
\begin{equation*}
8\ge \hat q=2(s_4 + m_4 e)+e\ge 5.
\end{equation*}
In particular, $\bar f$ is birational and $s_4>0$.
We get only one solution:
$\hat q=7$, $e=s_4=1$.
By Corollary~\ref{corollary:ind=7} the variety $\hat X$ is rational.
\end{scase}

\begin{scase}{\bf Subcase $\ind(X,P)=5$.}
Then we can write $\beta_2=4/5+m_2$ and $\beta_4=3/5+m_4$, where $m_2\ge 0$ and $m_4>0$. Therefore,
\begin{equation*}
8\ge \hat q=2(s_4 + m_4 e)+e=4(s_2 + m_2 e)+3e\ge 3.
\end{equation*}
In particular, $\bar f$ is birational and $s_4>0$.
We obtain $\hat q= 7$ and $s_4\le 2$.
Then $\hat X$ is rational again by Corollary~\ref{corollary:ind=7}.
\end{scase}

\begin{scase}{\bf Subcase $\ind(X,P)=11$.}
Then we can write $\beta_4=6/11+m_4$, where $m_4$ is a non-negative integer. Therefore,
\begin{equation}
\label{eq:q8c:4hat}
8\ge \hat q=2(s_4 + m_4 e)+e.
\end{equation}
Similarly, the relation~\eqref{equation-main} for $k=3$ has the form
\begin{equation}
\label{eq:q8c:3hat}
3 \hat q=8(s_3 + m_3 e)+7e,\qquad m_3\ge 0.
\end{equation}
One can see that there are only two solutions:
\begin{equation*}
(\hat q, e)= (5,1)\quad \text{or}\quad \text (7,3).
\end{equation*}

If $\hat q=7$, then 
by~\eqref{eq:q8c:4hat} we have $s_4= 2$. This contradicts Corollary~\ref{corollary:ind=7}.
Hence, $\hat q=5$ and $e=1$. 
Since $|A|=\emptyset$ and $e=1$, we have $\Cl(\hat X)_\tors\simeq \ZZ/n\ZZ$ with $1<n\le 3$
by Proposition~\ref{proposition:torsions}.
If $n=3$, then $s_3=0$ by Lemma~\ref{lemma-torsion-d}. Then $\bar f(\bar F)$ is a non-Gorenstein point by Corollary~\ref{corollary:gcd:qn}. 
The relation~\eqref{eq:q=q:rh1} gives us 
\[
3s_4 b\ge 20-8s_4, 
\]
where $s_4\le 2$ by~\eqref{eq:q8c:4hat}. Hence, $b\ge 2/3$.
This contradicts Corollary~\ref{corollary:tor:discr}.

Assume that $n=2$. Then $s_2=0$ by Lemma~\ref{lemma-torsion-d}.
The relation~\eqref{eq:q=q:rh1} for $k=3$ has the form 
\[
15-8s_3=2(bs_3 -5\gamma _3), 
\]
where $s_3=1$ by~\eqref{eq:q8c:3hat}. We see that $\bar f(\bar F)$ is a non-Gorenstein point and 
$b\ge 7/2$. Again, this contradicts Corollary~\ref{corollary:tor:discr}.
Proposition~\ref{prop:8} is proved.\qedhere
\end{scase}
\end{proof}
Now Theorem~\ref{theorem:main} follows from Propositions~\ref{prop:13}, \ref{prop:11}, \ref{prop:9}, and~\ref{prop:8}.

\newcommand{\etalchar}[1]{$^{#1}$}
\def\cprime{$'$}

\end{document}